\newtheorem{theorem}{Theorem}[section]
\newtheorem{lemma}[theorem]{Lemma}
\newtheorem{prop}[theorem]{Proposition}
\theoremstyle{remark}
\newtheorem{definition}[theorem]{Definition}
\newtheorem{example}[theorem]{Example}
\newtheorem{remark}[theorem]{Remark}
\def\i{\iota}
\def\N{{\mathbb N}}
\def\O{{\mathcal{O}}}
\def\C{{\mathbb C}}
\def\R{{\mathbb R}}
\def\TT{{\mathbb T}}
\def\Z{{\mathbb Z}}
\def\a{\tilde{\alpha}}
\def\U{{\mathcal{U}}}
\def\K{{\mathcal{K}}}
\def\H{{\mathcal{H}}}
\def\T{{\mathcal{T}}}
\def\I{{\mathcal{I}}}
\def\J{{\mathcal{J}}}
\def\L{{\mathcal{L}}}
\newcommand{\clsp}{\overline{\operatorname{span}}}
\newcommand{\lt}{\operatorname{lt}}
\newcommand{\id}{\operatorname{id}}
\newcommand{\piso}{\operatorname{piso}}
\newcommand{\iso}{\operatorname{iso}}
\newcommand{\Ind}{\operatorname{Ind}}
\newcommand{\Prim}{\operatorname{Prim}}
\newcommand{\whitesquare}{\hfill $\whitesquare$\newline\vspace{0.4cm}}
\numberwithin{equation}{section}
\begin{document}

\title[The primitive ideal space of the partial-isometric crossed product]
{The primitive ideal space of the partial-isometric crossed product of a system by a single automorphism}

\author[Wicharn Lewkeeratiyutkul]{Wicharn Lewkeeratiyutkul}
\address{Department of Mathematics and Computer Science, Faculty of Science, Chulalongkorn University, Bangkok 10330, Thailand}
\email{Wicharn.L@chula.ac.th}

\author[Saeid Zahmatkesh]{Saeid Zahmatkesh}
\email[Corresponding author]{saeid.zk09@gmail.com, Saeid.Z@chula.ac.th}

%\date{\today}

%\thanks{}

\subjclass[2010]{Primary 46L55}
\keywords{$C^*$-algebra, automorphism, partial isometry, crossed product, primitive ideal}

\begin{abstract}
Let $(A,\alpha)$ be a system consisting of a $C^*$-algebra $A$ and an automorphism $\alpha$ of $A$. We describe the primitive ideal space of the partial-isometric crossed product $A\times_{\alpha}^{\piso}\N$ of the system by using its realization as a full corner of a classical crossed product and applying some results of Williams and Echterhoff.
\end{abstract}
\maketitle

\section{Introduction}
\label{intro}
Lindiarni and Raeburn in \cite{LR} introduced the partial-isometric crossed product of a dynamical system $(A,\Gamma^{+},\alpha)$ in which $\Gamma^{+}$ is the positive cone of a totally ordered abelian group $\Gamma$ and $\alpha$ is an action of $\Gamma^{+}$ by endomorphisms of A. Note that since the $C^*$-algebra $A$ is not necessarily unital, we require that each endomorphism $\alpha_{s}$ extends to a strictly continuous endomorphism $\overline{\alpha}_{s}$ of the multiplier algebra $M(A)$. This for an endomorphism $\alpha$ of $A$ happens if and only if there exists an approximate identity $(a_{\lambda})$ in $A$  and a projection $p\in M(A)$ such that $\alpha(a_{\lambda})$ converges strictly to $p$ in $M(A)$. We stress that if $\alpha$ is extendible, then we may not have $\overline{\alpha}(1_{M(A)})=1_{M(A)}$. A covariant representation of the system $(A,\Gamma^{+},\alpha)$ is defined for which the endomorphisms $\alpha_{s}$ are implemented by partial isometries, and the associated partial-isometric crossed product $A\times_{\alpha}^{\piso}\Gamma^{+}$ of the system is a $C^*$-algebra generated by a universal covariant representation such that there is a bijection between covariant representations of the system and nondegenerate representations of $A\times_{\alpha}^{\piso}\Gamma^{+}$. This generalizes the covariant isometric representation theory that uses isometries to represent the semigroup of endomorphisms in a covariant representation of the system (see \cite{ALNR}). The authors of \cite{LR}, in particular, studied the structure of the partial-isometric crossed product of the distinguished system $(B_{\Gamma^{+}},\Gamma^{+},\tau)$, where the action $\tau$ of $\Gamma^{+}$ on the subalgebra $B_{\Gamma^{+}}$ of $\ell^{\infty}(\Gamma^{+})$ is given by the right translation. Later, in \cite{AZ}, the authors showed that $A\times_{\alpha}^{\piso}\Gamma^{+}$ is a full corner in a subalgebra of the $C^*$-algebra $\L(\ell^{2}(\Gamma^{+})\otimes A)$ of adjointable operators on the Hilbert $A$-module $\ell^{2}(\Gamma^{+})\otimes A\simeq \ell^{2}(\Gamma^{+},A)$. This realization led them to identify the kernel of the natural homomorphism $q:A\times_{\alpha}^{\piso}\Gamma^{+}\rightarrow A\times_{\alpha}^{\iso}\Gamma^{+}$ as a full corner of the compact operators $\K(\ell^{2}(\N)\otimes A)$, when $\Gamma^{+}$ is $\N:=\Z^{+}$. So as an application, they recovered the Pimsner-Voiculescu exact sequence in \cite{PV}. Then in their subsequent work \cite{AZ2}, they proved that for an extendible $\alpha$-invariant ideal $I$ of $A$ (see the definition in \cite{Adji1}), the partial-isometric crossed product $I\times_{\alpha}^{\piso}\Gamma^{+}$ sits naturally as an ideal in $A\times_{\alpha}^{\piso}\Gamma^{+}$ such that $(A\times_{\alpha}^{\piso}\Gamma^{+})/(I\times_{\alpha}^{\piso}\Gamma^{+})\simeq A/I\times_{\a}^{\piso}\Gamma^{+}$. This is actually a generalization of \cite[Theorem 2.2]{Adji-Abbas}. They then combined these results to show that the large commutative diagram of \cite[Theorem 5.6]{LR} associated to the system $(B_{\Gamma^{+}},\Gamma^{+},\tau)$ is valid for any totally ordered abelian group, not only for subgroups of $\R$. In particular, they use this large commutative diagram for $\Gamma^{+}=\N$ to describe the ideal structure of the algebra $B_{\N}\times_{\tau}^{\piso}\N$ explicitly.

Now here we consider a system $(A,\alpha)$ consisting of a $C^*$-algebra $A$ and an automorphism $\alpha$ of $A$. So we actually have an action of the positive cone $\N=\Z^{+}$ of integers $\Z$ by automorphisms of $A$. In the present work, we want to study $\Prim(A\times_{\alpha}^{\piso}\N)$, the primitive ideal space of the partial-isometric crossed product $A\times_{\alpha}^{\piso}\N$ of the system. Since $A\times_{\alpha}^{\piso}\N$ is in fact a full corner of the classical crossed product $(B_{\Z}\otimes A)\times\Z$ (see \cite[\S 5]{AZ}), $\Prim(A\times_{\alpha}^{\piso}\N)$ is homeomorphic to $\Prim((B_{\Z}\otimes A)\times\Z)$. Therefore it is enough to describe $\Prim((B_{\Z}\otimes A)\times\Z)$. To do this, we apply the results on describing the primitive ideal space (ideal structure) of the classical crossed products from \cite{W,ECH}. So we consider the following two conditions:
\begin{itemize}
\item[(1)] when $A$ is separable and abelian;
\item[(2)] when $A$ is separable and $\Z$ acts on $\Prim A$ freely (see \S\ref{sec:pre}).
\end{itemize}
For the first condition, by applying a theorem of Williams, $\Prim((B_{\Z}\otimes A)\times\Z)$ is homeomorphic to a quotient space of $\Omega(B_{\Z})\times \Omega(A)\times\TT$, where $\Omega(B_{\Z})$ and $\Omega(A)$ are the spectrums of the $C^*$-algebras $B_{\Z}$ and $A$ respectively (recall that the dual $\hat{\Z}$ is identified with $\TT$ via the map $z\mapsto(\gamma_{z}:n\mapsto z^{n}$)). By computing $\Omega(B_{\Z})$, we parameterize the quotient space as a disjoint union, and then we precisely identify the open sets. For the second condition, we apply a result of Echterhoff which shows that $\Prim((B_{\Z}\otimes A)\times\Z)$ is homeomorphic to the quasi-orbit space of $\Prim (B_{\Z}\otimes A)=\Prim B_{\Z} \times \Prim A$ (see in \S\ref{sec:pre} that this is a quotient space of $\Prim (B_{\Z}\otimes A)$). Again by a similar argument to the first condition, we describe the quotient space and its topology precisely.

We begin with a preliminary section in which we recall the theory of the partial-isometric crossed products, and some discussions on the primitive ideal space of the classical crossed products briefly. In section \ref{sec:prim auto piso}, for a system $(A,\alpha)$ consisting of a $C^*$-algebra $A$ and an automorphism $\alpha$ of $A$, we apply the works of Williams and Echterhoff to describe $\Prim(A\times_{\alpha}^{\piso}\N)$ using the realization of $A\times_{\alpha}^{\piso}\N$ as a full corner of the classical crossed product $(B_{\Z}\otimes A)\times\Z$. As some examples, we compute the primitive ideal space of $C(\TT)\times_{\alpha}^{\piso}\N$ where the action $\alpha$ is given by rotation through the angle $2\pi\theta$ with $\theta$ rational and irrational. Moreover the description of the primitive ideal space of the Pimsner-Voiculescu Toeplitz algebra associated to the system $(A,\alpha)$ is completely obtained, as it is isomorphic to $A\times_{\alpha^{-1}}^{\piso}\N$. We also discuss necessary and sufficient conditions under which $A\times_{\alpha}^{\piso}\N$ is GCR (postliminal or type I). Finally in the last section, we discuss the primitivity and simplicity of $A\times_{\alpha}^{\piso}\N$.

\section{Preliminaries}
\label{sec:pre}
\subsection{The partial-isometric crossed product}
\label{piso cp}
A \emph{partial-isometric representation} of $\N$ on a Hilbert space $H$ is a map $V:\N\rightarrow B(H)$ such that each $V_{n}:= V(n)$ is a partial isometry, and $V_{n+m} = V_{n}V_{m}$ for all $n,m\in\N$.

A \emph{covariant partial-isometric representation} of $(A,\alpha)$ on a Hilbert space $H$ is a pair $(\pi, V)$ consisting of a nondegenerate representation $\pi: A\rightarrow B(H)$ and a partial-isometric representation $V :\N \rightarrow B(H)$ such that
\begin{equation}
\label{piso cov eq.1}
\pi(\alpha_{n}(a))=V_{n}\pi(a) V_{n}^{*}\ \ \textrm{and}\ \ V_{n}^{*}V_{n}\pi(a)=\pi(a)V_{n}^{*}V_{n}
\end{equation}
for all $a\in A$ and $n\in\N$.

Note that every system $(A, \alpha)$ admits a nontrivial covariant partial-isometric representation \cite[Example 4.6]{LR}: let $\pi$ be a nondegenerate representation of $A$ on $H$. Define $\Pi:A\rightarrow B(\ell^2(\N,H))$ by $(\Pi(a)\xi)(n)=\pi(\alpha_{n}(a))\xi(n)$. If
$$\H :=\clsp\{\xi\in \ell^2(\N,H): \xi(n) \in\overline{\pi}(\overline{\alpha}_{n}(1))H\ \textrm{for all}\ n\},$$
then the representation $\Pi$ is nondegenerate on $\H$. Now for every $m\in\N$, define $V_{m}$ on $\H$ by $(V_{m}\xi)(n)=\xi(n+m)$. Then the pair $(\Pi|_{\H},V)$ is a partial-isometric covariant representation of $(A, \alpha)$ on $\H$. One can see that if we take $\pi$ faithful, then $\Pi$ will be faithful as well, and $\H=\ell^2(\N,H)$ whenever $\overline{\alpha}(1)=1$ (e.g. when $\alpha$ is an automorphism).

\begin{definition}
\label{piso cp df}
A partial-isometric crossed product of $(A,\alpha)$ is a triple $(B,j_{A},j_{\N})$ consisting of a $C^*$-algebra $B$, a nondegenerate homomorphism $i_{A}:A\rightarrow B$, and a partial-isometric representation $i_{\N}:\N\rightarrow M(B)$ such that:
\begin{itemize}
\item[(i)] the pair $(j_{A},j_{\N})$ is a covariant representation of $(A, \alpha)$ in $B$;
\item[(ii)] for every covariant partial-isometric representation $(\pi,V)$ of $(A, \alpha)$ on a Hilbert space $H$, there exists a nondegenerate representation
$\pi\times V: B\rightarrow B(H)$ such that $(\pi\times V) \circ i_{A}=\pi$ and $(\overline{\pi\times V}) \circ i_{\N}=V$; and
\item[(iii)] the $C^*$-algebra $B$ is spanned by $\{i_{\N}(n)^{*} i_{A}(a) i_{\N}(m) : n,m \in\N, a\in A\}$.
\end{itemize}
By \cite[Proposition 4.7]{LR}, the partial-isometric crossed product of $(A,\alpha)$ always exists, and it is unique up to isomorphism. Thus we write the partial-isometric crossed product $B$ as $A\times_{\alpha}^{\piso}\N$.
\end{definition}

We recall that by \cite[Theorem 4.8]{LR}, a covariant representation $(\pi, V)$ of $(A,\alpha)$ on $H$ induces a faithful representation $\pi\times V$ of $A\times_{\alpha}^{\piso}\N$ if and only if $\pi$ is faithful on the range of $(1-V_{n}^{*}V_{n})$ for every $n > 0$ (one can actually see that it is enough to verify that $\pi$ is faithful on the range of $(1-V^{*}V)$, where $V:=V_{1}$).

\subsection{The primitive ideal space of crossed products associated to second countable locally compact transformation groups}
\label{prim ideal LCTG}
Let $\Gamma$ be a discrete group which acts on a topological space $X$. For every $x\in X$, the set $\Gamma\cdot x:=\{s\cdot x: s\in \Gamma\}$ is called the \emph{$\Gamma$-orbit} of $x$. The set $\Gamma_{x}:=\{s\in \Gamma: s\cdot x=x\}$, which is a subgroup of $\Gamma$, is called the \emph{stability group} of $x$. We say the $\Gamma$-action is \emph{free} or $\Gamma$ acts on $X$ \emph{freely} if $\Gamma_{x}=\{e\}$ for all $x\in X$. Consider a relation $\sim$ on $X$ such that for $x,y\in X$, $x\sim y$ if and only if $\overline{\Gamma\cdot x}=\overline{\Gamma\cdot y}$. One can see that this is an equivalence relation on $X$. The set of all equivalence classes equipped with the quotient topology is denoted by $\O(X)$ and called the \emph{quasi-orbit space}, which is always a $T_{0}$-topological space. The equivalence class of each $x\in X$ is denoted by $\O(x)$ and called the \emph{quasi-orbit} of $x$.

Now let $\Gamma$ be an abelian countable discrete group which acts on a second countable locally compact Hausdorff space $X$. So $(\Gamma,X)$ is a second countable locally compact transformation group with $\Gamma$ abelian. Then the associated dynamical system $(C_{0}(X),\Gamma,\tau)$ is separable with $\Gamma$ abelian, and so the primitive ideals of $C_{0}(X)\times_{\tau} \Gamma$ are known (see \cite[Theorem 8.21]{W}). Furthermore, the topology of $\Prim(C_{0}(X)\times_{\tau} \Gamma)$ has been beautifully described \cite[Theorem 8.39]{W}. So here we want to recall the discussion on $\Prim (C_{0}(X)\times_{\tau} \Gamma)$ in brief. See more in \cite{W} that this is indeed a huge and deep discussion.

Let $N$ be a subgroup of $\Gamma$. If we restrict the action $\tau$ to $N$, then we obtain a dynamical system $(C_{0}(X), N,\tau|_{N})$ with the associated crossed product $C_{0}(X)\times_{\tau|_{N}} N$. Suppose that $\textsf{X}_{N}^{\Gamma}$ is the Green's $((C_{0}(X)\otimes C_{0}(\Gamma/N))\times_{\tau\otimes\lt} \Gamma)-(C_{0}(X)\times_{\tau|_{N}} N)$-imprimitivity bimodule whose structure can be found in \cite[Theorem 4.22]{W}. If $(\pi,V)$ is a covariant representation of $(C_{0}(X), N,\tau|_{N})$, then $\Ind_{N}^{\Gamma}(\pi\times V)$ denotes the representation of $C_{0}(X)\times_{\tau} \Gamma$ induced from the representation $\pi\times V$ of $C_{0}(X)\times_{\tau|_{N}} N$ via $\textsf{X}_{N}^{\Gamma}$. Now for $x\in X$, let $\varepsilon_{x}:C_{0}(X)\rightarrow \mathbb{C}\simeq B(\mathbb{C})$ be the evaluation map at $x$ and $w$ a character of $\Gamma_{x}$. Then the pair $(\varepsilon_{x},w)$ is a covariant representation of $(C_{0}(X),\Gamma_{x},\tau|_{\Gamma_{x}})$ such that the associated representation $\varepsilon_{x}\times w$ of $C_{0}(X)\times \Gamma_{x}$ is irreducible, and hence by \cite[Proposition 8.27]{W}, $\Ind_{\Gamma_{x}}^{\Gamma}(\varepsilon_{x}\times w)$ is an irreducible representation of $C_{0}(X)\times_{\tau} \Gamma$. So $\textrm{ker}\ \big(\Ind_{\Gamma_{x}}^{\Gamma}(\varepsilon_{x}\times w)\big)$ is a primitive ideal of $C_{0}(X)\times_{\tau} \Gamma$. Note if a primitive ideal is obtained in this way, then we say it is \emph{induced from a stability group}. In fact by \cite[Theorem 8.21]{W}, all primitive ideals of $C_{0}(X)\times_{\tau} \Gamma$ are induced from stability groups. Moreover since for every $w\in \widehat{\Gamma_{x}}$ there is a $\gamma\in \widehat{\Gamma}$ such that $w=\gamma|_{\Gamma_{x}}$, every primitive ideal of $C_{0}(X)\times_{\tau} \Gamma$ is actually given by the kernel of an induced irreducible representation $\Ind_{\Gamma_{x}}^{\Gamma}(\varepsilon_{x}\times \gamma|_{\Gamma_{x}})$ correspondent to a pair $(x,\gamma)$ in $X\times \widehat{\Gamma}$. To see the description of the topology of $\Prim(C_{0}(X)\times_{\tau} \Gamma)$, first note that if $(x,\gamma)$ and $(y,\mu)$ belong to $X\times \widehat{\Gamma}$ such that $\overline{\Gamma\cdot x}=\overline{\Gamma\cdot y}$ (which implies that $\Gamma_{x}=\Gamma_{y}$) and $\gamma|_{\Gamma_{x}}=\mu|_{\Gamma_{x}}$, then by \cite[Lemma 8.34]{W},
$$\textrm{ker}\ \big(\Ind_{\Gamma_{x}}^{\Gamma}(\varepsilon_{x}\times \gamma|_{\Gamma_{x}})\big)=\textrm{ker}\ \big(\Ind_{\Gamma_{y}}^{\Gamma}(\varepsilon_{y}\times \mu|_{\Gamma_{y}})\big).$$
So define a relation on $X\times \widehat{\Gamma}$ such that $(x,\gamma)\sim (y,\mu)$ if
\begin{align}
\label{f2}
\overline{\Gamma\cdot x}=\overline{\Gamma\cdot y}\ \ \textrm{and}\ \ \gamma|_{\Gamma_{x}}=\mu|_{\Gamma_{x}}.
\end{align}
One can see that $\sim$ is an equivalence relation on $X\times \widehat{\Gamma}$. Now consider the quotient space $X\times \widehat{\Gamma}/\sim$ equipped with the quotient topology. Then we have:
\begin{theorem}\cite[Theorem 8.39]{W}
\label{Will th}
Let $(\Gamma,X)$ be a second countable locally compact transformation group with $\Gamma$ abelian. Then the map $\Phi:X\times \widehat{\Gamma} \rightarrow \Prim(C_{0}(X)\times_{\tau} \Gamma)$ defined by
$$\Phi(x,\gamma):= \ker\ \big(\Ind_{\Gamma_{x}}^{\Gamma}(\varepsilon_{x}\times \gamma|_{\Gamma_{x}})\big)$$
is a continuous and open surjection, and factors through a homeomorphism of $X\times \widehat{\Gamma}/\sim$ onto $\Prim(C_{0}(X)\times_{\tau} \Gamma)$.
\end{theorem}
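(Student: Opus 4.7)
The plan is to prove the theorem in four steps: establish surjectivity of $\Phi$, verify that it factors through the equivalence relation $\sim$, show the resulting map $\widetilde{\Phi}$ is injective, and finally establish that $\Phi$ is continuous and open (whence $\widetilde{\Phi}$ is a homeomorphism). Surjectivity follows from \cite[Theorem 8.21]{W}, as every primitive ideal of $C_{0}(X)\times_{\tau}\Gamma$ is induced from a stability group; since $\Gamma$ is abelian, each character of the closed subgroup $\Gamma_{x}$ extends to a character of $\Gamma$ by Pontryagin duality, so every primitive ideal indeed has the form $\Phi(x,\gamma)$ for some $(x,\gamma)\in X\times\widehat{\Gamma}$. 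That $\Phi$ factors through $\sim$ is exactly \cite[Lemma 8.34]{W}, already cited in the discussion preceding the theorem.

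For injectivity of $\widetilde{\Phi}$, the strategy is to recover the equivalence class of $(x,\gamma)$ from the ideal $\Phi(x,\gamma)$. The closed quasi-orbit $\overline{\Gamma\cdot x}$ is recovered as the hull in $X$ of the $\Gamma$-invariant ideal $\Phi(x,\gamma)\cap C_{0}(X)$, which uniquely determines $\overline{\Gamma\cdot x}$ and hence $\Gamma_{x}$. To recover $\gamma|_{\Gamma_{x}}$, I would exploit the dual action of $\widehat{\Gamma}$ on $C_{0}(X)\times_{\tau}\Gamma$: translation by $\chi\in\widehat{\Gamma}$ sends $\Phi(x,\gamma)$ to $\Phi(x,\chi\gamma)$, so the stabilizer of $\Phi(x,\gamma)$ in $\widehat{\Gamma}$ equals the annihilator $\Gamma_{x}^{\perp}$, and the coset $\gamma\Gamma_{x}^{\perp}\in\widehat{\Gamma}/\Gamma_{x}^{\perp}\cong\widehat{\Gamma_{x}}$ uniquely identifies $\gamma|_{\Gamma_{x}}$.

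Continuity and openness of $\Phi$ constitute the main technical obstacle. For continuity, I would argue that if $(x_{\lambda},\gamma_{\lambda})\to(x,\gamma)$ in $X\times\widehat{\Gamma}$, then the associated representations $\varepsilon_{x_{\lambda}}\times\gamma_{\lambda}|_{\Gamma_{x_{\lambda}}}$ converge in the appropriate weak-containment sense to $\varepsilon_{x}\times\gamma|_{\Gamma_{x}}$, and continuity of Rieffel induction with respect to weak containment propagates this through $\Ind_{\Gamma_{x}}^{\Gamma}$, giving $\Phi(x_{\lambda},\gamma_{\lambda})\to\Phi(x,\gamma)$ in the Jacobson topology. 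A subtlety here is that the stability groups $\Gamma_{x_{\lambda}}$ may not equal $\Gamma_{x}$, so the comparison must be made through representations of the full group $\Gamma$ rather than the varying subgroups directly.

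Openness is the hardest step: given a basic open set $U\times W$ in $X\times\widehat{\Gamma}$ and a point $(x,\gamma)$ therein, one must exhibit a Jacobson-open neighborhood of $\Phi(x,\gamma)$ contained in $\Phi(U\times W)$. The standard approach uses the hull-kernel structure of $\Prim(C_{0}(X)\times_{\tau}\Gamma)$ together with an explicit description of which primitive ideals are weakly contained in $\{\Phi(y,\mu):(y,\mu)\in U\times W\}$, leveraging the structure of Green's imprimitivity bimodule $\textsf{X}_{\Gamma_{x}}^{\Gamma}$ to transfer open sets of $\Prim(C_{0}(X)\times_{\tau|_{\Gamma_{x}}}\Gamma_{x})$ to open sets of $\Prim(C_{0}(X)\times_{\tau}\Gamma)$ via the Rieffel correspondence. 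This is where the full force of Williams' machinery in \cite{W} is required.
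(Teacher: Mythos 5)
A preliminary remark: the paper does not prove this statement at all --- it is quoted verbatim from Williams \cite[Theorem 8.39]{W} and used as a black box, so there is no in-paper argument to compare yours against. Your outline does reproduce the architecture of Williams' own proof: surjectivity from \cite[Theorem 8.21]{W} together with the extension of characters from the closed subgroup $\Gamma_{x}$ to $\Gamma$, well-definedness on $\sim$-classes from \cite[Lemma 8.34]{W}, and then injectivity plus continuity and openness of $\Phi$. The first two steps are fine and are exactly what the paper's own preamble already records.

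The injectivity step, however, is circular as written. The dual action of $\widehat{\Gamma}$ does carry $\Phi(x,\gamma)$ to $\Phi(x,\chi\gamma)$, and \cite[Lemma 8.34]{W} gives the inclusion $\Gamma_{x}^{\perp}\subseteq\operatorname{Stab}\big(\Phi(x,\gamma)\big)$; but the reverse inclusion --- that $\Phi(x,\chi\gamma)=\Phi(x,\gamma)$ forces $\chi|_{\Gamma_{x}}=1$ --- is precisely the injectivity assertion in the character variable that you are trying to prove, so the stabilizer computation cannot be taken as given. Some independent input is needed here: Williams recovers $\gamma|_{\Gamma_{x}}$ from the ideal by analysing the restriction of induced ideals back down to the stability group (the Mackey-type analysis surrounding his Theorem 8.39), and your argument needs an analogous device. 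You also tacitly use that $\overline{\Gamma\cdot x}=\overline{\Gamma\cdot y}$ forces $\Gamma_{x}=\Gamma_{y}$, a true but nontrivial lemma for abelian $\Gamma$ that should be cited. Finally, the continuity and openness paragraphs correctly name the relevant tools (continuity of induction with respect to weak containment, the imprimitivity bimodule and the Rieffel correspondence) but do not carry out either argument; since that is where the bulk of the work in \cite{W} actually lies, the proposal should be read as a mostly correct roadmap with one genuine logical gap (the stabilizer claim) rather than as a proof.
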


\begin{remark}
\label{rmk 1}
In the theorem above, note that $\Prim(C_{0}(X)\times_{\tau} \Gamma)$ is then a second countable space. This is because as it is mentioned in \cite[Remark 8.40]{W}, the quotient map $\textsf{q}:X\times \widehat{\Gamma}\rightarrow X\times \widehat{\Gamma}/\sim$ is open. Moreover, $X$ and $\widehat{\Gamma}$ both are second countable.
\end{remark}

Theorem \ref{Will th} can be applied to see that the primitive ideal space of the rational rotation algebra is homeomorphic to $\mathbb{T}^{2}$. We skip it here and refer readers to \cite[Example 8.45]{W} for more details.

\subsection{The primitive ideal space of crossed products by free actions}
\label{free action}
Let $(A,\Gamma,\alpha)$ be a classical dynamical system with $\Gamma$ discrete. Then the system gives an action of $\Gamma$ on the spectrum $\hat{A}$ of $A$ by $s\cdot[\pi]:=[\pi\circ\alpha_{s}^{-1}]$ for every $s\in\Gamma$ and $[\pi]\in \hat{A}$ (see \cite[Lemma 2.8]{W} and \cite[Lemma 7.1]{RW}). This also induces an action of $\Gamma$ on $\Prim A$ such that $s\cdot P:=\alpha_{s}(P)$ for each $s\in\Gamma$ and $P\in \Prim A$.

Recall that if  $\pi$ is a (nondegenerate) representation of $A$ on $H$ with $\ker\pi=J$, then $\Ind\pi$ denotes the induced representation $\tilde{\pi}\times U$ of $A\times_{\alpha} \Gamma$ on $\ell^{2}(\Gamma,H)$ associated to the covariant pair $(\tilde{\pi},U)$ of $(A,\Gamma,\alpha)$ defined by
$$(\tilde{\pi}(a)\xi)(s)=\pi(\alpha_{s}^{-1}(a))\xi(s)\ \ \textrm{and}\ \ (U_{t}\xi)(s)=\xi(t^{-1}s)$$ for all every $a\in A$, $\xi\in \ell^{2}(\Gamma,H)$, and $s,t\in\Gamma$. Note that by $\Ind J$, we mean $\ker(\Ind \pi)$.

Now let $(A,\Gamma,\alpha)$ be a classical dynamical system in which $A$ is separable and $\Gamma$ is an abelian discrete countable group. If $\Gamma$ acts on $\Prim A$ freely, then each primitive ideal $\ker\pi=P$ of $A$ induces a primitive ideal of $A\times_{\alpha} \Gamma$, namely $\Ind P=\ker(\Ind \pi)$, and the description of $\Prim(A\times_{\alpha} \Gamma)$ is completely available:
\begin{theorem}\cite[Corollary 10.16]{ECH}
\label{prim free action}
Suppose in the system $(A,\Gamma,\alpha)$ that $A$ is separable and $\Gamma$ is an amenable discrete countable group. If $\Gamma$ acts on $\Prim A$ freely, then the map
$$\O(\Prim A)\rightarrow \Prim(A\times_{\alpha} \Gamma)$$
$$\O(P)\mapsto \Ind P=\ker(\Ind \pi)$$
is a homeomorphism, where $\pi$ is an irreducible representation of $A$ with $\ker\pi=P$. In particular, $A\times_{\alpha} \Gamma$ is simple if and only if every $\Gamma$-orbit is dense in $\Prim A$.
\end{theorem}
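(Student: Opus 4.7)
The plan is to invoke the Effros--Hahn induction apparatus for separable crossed products by amenable groups and to exploit the fact that the freeness hypothesis forces every stability group $\Gamma_{P}$ to be trivial, which collapses the Mackey-type analysis to its simplest case. The map will be constructed, shown to descend to $\O(\Prim A)$, shown bijective, and finally shown to be a homeomorphism in both directions.

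First I would verify that $P \mapsto \Ind P$ is well defined on quasi-orbits. Pick an irreducible $\pi$ with $\ker\pi = P$ and form $\Ind \pi$ on $\ell^{2}(\Gamma,H)$ as in the preliminary. Under the freeness assumption one checks that $\Ind \pi$ is irreducible (the usual Mackey-style argument degenerates since $\Gamma_{P}=\{e\}$), so $\Ind P$ is primitive. Equivariance of induction under $s \mapsto \alpha_{s}$ gives $\Ind (s\cdot P) = \Ind P$, and lower semicontinuity of $P \mapsto \Ind P$ in the Fell topology promotes this to $\Ind P = \Ind Q$ whenever $\overline{\Gamma\cdot P} = \overline{\Gamma\cdot Q}$. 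Hence the assignment factors through the quasi-orbit space.

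For surjectivity I would invoke the Gootman--Rosenberg--Sauvageot theorem, i.e.\ the solution of the generalized Effros--Hahn conjecture: when $\Gamma$ is amenable, every primitive ideal of $A\times_{\alpha}\Gamma$ has the form $\Ind P$ for some $P \in \Prim A$. Injectivity is then cleaner: if $\Ind P = \Ind Q$, then freeness implies that $\Ind \pi|_{A}$ decomposes as the direct sum $\bigoplus_{s\in\Gamma}(\pi\circ\alpha_{s}^{-1})$ with pairwise disjoint summands, so the restriction $(\Ind P)\cap A = \bigcap_{s\in\Gamma}\alpha_{s}(P)$ determines, and is determined by, the closed orbit of $P$; equality $\Ind P = \Ind Q$ therefore forces $\overline{\Gamma\cdot P} = \overline{\Gamma\cdot Q}$.

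For the topological statement, continuity of $\O(P) \mapsto \Ind P$ comes from continuity of induction in the Jacobson topology together with the definition of the quotient topology on $\O(\Prim A)$. Openness (equivalently, continuity of the inverse) follows by showing that the restriction map $\Prim(A\times_{\alpha}\Gamma) \to \O(\Prim A)$ sending $L$ to the quasi-orbit of any $P \supseteq L \cap A$ is a well-defined continuous inverse. The simplicity addendum then becomes a triviality: $A\times_{\alpha}\Gamma$ is simple iff its primitive ideal space is a singleton, which via the homeomorphism is equivalent to $\O(\Prim A)$ being a point, i.e.\ every $\Gamma$-orbit being dense in $\Prim A$. The main obstacle is unquestionably the Gootman--Rosenberg--Sauvageot theorem, which is the genuinely deep input and where amenability is essential; once it is granted the remaining steps are careful but routine.
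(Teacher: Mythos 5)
The paper offers no proof of this statement: it is imported verbatim from Echterhoff's lecture notes as \cite[Corollary 10.16]{ECH}, so there is no internal argument to compare yours against. Measured against the proof in the literature, your outline is essentially the standard one. The Gootman--Rosenberg--Sauvageot theorem is indeed the one genuinely deep ingredient, giving surjectivity (every primitive ideal of $A\times_{\alpha}\Gamma$ is induced from a stability group, and freeness makes every stability group trivial, so every primitive ideal is of the form $\Ind P$); well-definedness and injectivity reduce, exactly as you say, to the equivalence $\Ind P\subseteq \Ind Q \iff Q\in\overline{\Gamma\cdot P}$, which one reads off from $(\Ind P)\cap A=\bigcap_{s}\alpha_{s}(P)=\ker\bigl(\overline{\Gamma\cdot P}\bigr)$ together with the induction--restriction machinery for amenable $\Gamma$; and the simplicity addendum is immediate from the homeomorphism.

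One step is not right as literally written: the inverse map you propose, sending $L$ to ``the quasi-orbit of any $P\supseteq L\cap A$,'' is not well defined. The hull of $L\cap A$ is the closed invariant set $\overline{\Gamma\cdot P}$, and a primitive ideal $Q$ lying in that closed set need not satisfy $\overline{\Gamma\cdot Q}=\overline{\Gamma\cdot P}$, so different choices of $P$ containing $L\cap A$ can give different quasi-orbits. The repair is the standard ``lives over a quasi-orbit'' lemma for separable systems: for each primitive $L$ there is a \emph{unique} quasi-orbit $\O(P)$ with $\ker\bigl(\overline{\Gamma\cdot P}\bigr)=L\cap A$ (equivalently, $\Gamma\cdot P$ is dense in the hull of $L\cap A$), and $L\mapsto\O(P)$ is the continuous inverse; alternatively one can bypass the inverse and quote openness of the induction map directly. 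With that correction your reconstruction is complete and coincides with the argument in \cite{ECH}.
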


We can apply the above Theorem to see that the irrational rotation algebras are simple. Readers can refer to \cite[Example 10.18]{ECH} or \cite[Example 8.46]{W} for more details.

\section{The Primitive Ideal Space of $A\times_{\alpha}^{\piso}\N$ by automorphic action}
\label{sec:prim auto piso}
First recall that if $T$ is the isometry in $B(\ell^{2}(\N))$ such that $T(e_{n})=e_{n+1}$ on the usual orthonormal basis $\{e_n\}_{n=0}^{\infty}$ of $\ell^2(\mathbb{N})$, then we have $$\K(\ell^{2}(\N))=\clsp\{T_n(1-TT^{*})T_{m}^{*}:n,m\in\N\}.$$

Now consider a system $(A,\alpha)$ consisting of a $C^*$-algebra $A$ and an automorphism $\alpha$ of $A$. Let the triples $(A\times_{\alpha}^{\piso}\N,j_{A},v)$ and $(A\times_{\alpha}\Z,i_{A},u)$ be the partial-isometric crossed product and the classical crossed product of the system respectively. Here our goal is to describe the primitive ideal space of $A\times_{\alpha}^{\piso}\N$ and its topology completely. See in \cite{AZ} that the kernel of the natural homomorphism $q:(A\times_{\alpha}^{\piso}\N,j_{A},v)\rightarrow (A\times_{\alpha}\Z,i_{A},u)$ given by $q(v_{n}^{*}j_{A}(a)v_{m})=u_{n}^{*}i_{A}(a)u_{m}$, is isomorphic to the algebra of compact operators $\K(\ell^{2}(\N))\otimes A$. Therefore we have a short exact sequence
\begin{align}
\label{exact seq.1}
0 \longrightarrow (\K(\ell^{2}(\N))\otimes A) \stackrel{\mu}{\longrightarrow} A\times_{\alpha}^{\piso}\N
\stackrel{q}{\longrightarrow} A\times_{\alpha}\Z \longrightarrow 0,
\end{align}
where $\mu(T_n(1-TT^{*})T_{m}^{*}\otimes a)=v_{n}^{*}j_{A}(a)(1-v^{*}v)v_{m}$ for all $a\in A$ and $n,m\in\N$. So $\Prim (A\times_{\alpha}^{\piso}\N)$ as a set, is given by the sets $\Prim (\K(\ell^{2}(\N))\otimes A)$ and $\Prim \big(A\times_{\alpha}\Z)$. With no condition on the system, we do not have much information about $\Prim \big(A\times_{\alpha}\Z)$ in general. However, by \cite[Proposition 2.5]{AZ}, we do know that $\ker q\simeq \K(\ell^{2}(\N))\otimes A$ is an essential ideal of $A\times_{\alpha}^{\piso}\N$. Therefore $\Prim (\K(\ell^{2}(\N))\otimes A)$ which is homeomorphic to $\Prim A$, sits in $\Prim (A\times_{\alpha}^{\piso}\N)$ as an open dense subset. We will identify this open dense subset, namely the primitive ideals $\{\I_{P}: P\in\Prim A\}$ of $\Prim (A\times_{\alpha}^{\piso}\N)$ coming from $\Prim A$, shortly. Moreover see in \cite[\S 5]{AZ} that $A\times_{\alpha}^{\piso}\N$ is a full corner of the classical crossed product $(B_{\Z}\otimes A)\times_{\beta\otimes\alpha^{-1}}\Z$, where $B_{\mathbb{Z}}:=\clsp\{1_{n}: n\in\mathbb{Z}\}\subset \ell^{\infty}(\mathbb{Z})$, and the action $\beta$ of $\mathbb{Z}$ on $B_{\mathbb{Z}}$ is given by translation such that $\beta_{m}(1_{n})=1_{n+m}$ for all $m,n\in \mathbb{Z}$. Thus $\Prim (A\times_{\alpha}^{\piso}\N)$ is homeomorphic to $\Prim ((B_{\Z}\otimes A)\times_{\beta\otimes\alpha^{-1}}\Z)$, and hence it suffices to describe $\Prim ((B_{\Z}\otimes A)\times_{\beta\otimes\alpha^{-1}}\Z)$ and its topology. To do this, we will consider two conditions on the system that make us able to apply a theorem of Williams and a result by Echterhoff. We will also identify those primitive ideals of $A\times_{\alpha}^{\piso}\N$ coming from $\Prim \big(A\times_{\alpha}\Z)$, which form a closed subset of $\Prim (A\times_{\alpha}^{\piso}\N)$. But first, let us identify the primitive ideals $\I_{P}$.

\begin{prop}
\label{Ip}
Let $\pi:A\rightarrow B(H)$ be a nonzero irreducible representation of $A$ with $P:=\ker \pi$. If the pair $(\Pi,V)$ is defined as in \cite[Example 4.6]{LR} (see \S\ref{sec:pre}), then the associated representation of $(A\times_{\alpha}^{\piso}\N,j_{A},v)$, which we denote by $(\Pi\times V)_{P}$, is irreducible on $\ell^2(\N,H)$, and does not vanish on $\ker q\simeq\K(\ell^{2}(\N))\otimes A$.
\end{prop}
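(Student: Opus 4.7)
The plan is to verify irreducibility by computing the commutant of $(\Pi\times V)_P$ on $\ell^2(\N,H)$, and separately to exhibit a nonzero element of $\ker q$ on which the representation does not vanish.

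Since $\alpha$ is an automorphism, $\overline{\alpha}(1_{M(A)})=1_{M(A)}$, so the Hilbert space from \cite[Example 4.6]{LR} is all of $\ell^2(\N,H)$ and $V_0=I$. Writing $V:=V_1$, a direct calculation gives $VV^*=I$ and $V^*V=I-P_0$, where $P_0$ is the orthogonal projection of $\ell^2(\N,H)$ onto the zeroth summand $H_0\cong H$.

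For irreducibility, let $T\in B(\ell^2(\N,H))$ lie in the commutant of $(\Pi\times V)_P$. Then $T$ commutes with $\Pi(A)$ (the image of $j_A(A)$), and applying an approximate identity $\Pi(e_\lambda)\to I$ to the spanning elements $V_n^*\Pi(a)V_m$ also places $V$ and $V^*$ in the strong-operator closure of the image, so $T$ commutes with both $V$ and $V^*$ as well. Writing $T$ as a block matrix $(T_{n,m})_{n,m\in\N}$ relative to $\ell^2(\N,H)=\bigoplus_{n\in\N}H_n$, the identity $TV=VT$ yields $T_{n+1,m+1}=T_{n,m}$ together with $T_{n+1,0}=0$, while $TV^*=V^*T$ yields symmetrically $T_{0,m+1}=0$. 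Iterating these forces all off-diagonal blocks to vanish and all diagonal blocks to equal a single $T_0\in B(H)$. The commutation $T\Pi(a)=\Pi(a)T$ then reads $T_0\pi(\alpha_n(a))=\pi(\alpha_n(a))T_0$ for every $n\in\N$ and $a\in A$; since each $\alpha_n$ is surjective on $A$, this places $T_0$ in the commutant of $\pi(A)$, and irreducibility of $\pi$ gives $T_0\in\C\cdot I_H$, whence $T$ is scalar.

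For the non-vanishing claim, pick $a\in A$ with $\pi(a)\neq 0$, which exists because $\pi$ is nonzero. Setting $n=m=0$ in the formula $\mu(T_n(1-TT^*)T_m^*\otimes a)=v_n^*j_A(a)(1-v^*v)v_m$ shows that $j_A(a)(1-v^*v)\in\ker q$, and its image under $(\Pi\times V)_P$ is $\Pi(a)(I-V^*V)=\Pi(a)P_0$. Choosing $\xi\in H_0$ with $\pi(a)\xi\neq 0$ gives $\Pi(a)P_0\xi=\pi(a)\xi\neq 0$, as required. The main bookkeeping challenge is the block-matrix computation in the commutant step, but since it reduces to the two elementary shift identities above, no real difficulty is expected.
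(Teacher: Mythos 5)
Your argument is correct, but the irreducibility half takes a genuinely different route from the paper's. The paper proves irreducibility by showing every nonzero $\xi\in\ell^{2}(\N,H)$ is cyclic: choosing $m$ with $\xi(m)\neq 0$ and using cyclicity of $\xi(m)$ for $\pi$, it computes $\big(V_{n}^{*}\Pi(a)(1-V^{*}V)V_{m}\big)\xi=e_{n}\otimes\pi(a)\xi(m)$, so that the image of the ideal $\ker q\simeq\K(\ell^{2}(\N))\otimes A$ applied to $\xi$ is already dense. You instead compute the commutant: you first pass to the strong closure of the image via an approximate identity to show that any commuting $T$ also commutes with $V$ and $V^{*}$ (this step is legitimate --- $\Pi$ is nondegenerate because $\overline{\alpha}(1)=1$, and commutants are closed under strong limits of bounded nets), and then the two shift identities $T_{n+1,m+1}=T_{n,m}$, $T_{n+1,0}=0=T_{0,m+1}$ correctly force $T=I\otimes T_{0}$ with $T_{0}\in\pi(A)'=\C I$. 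Both proofs are sound; what the paper's cyclic-vector argument buys is the slightly stronger conclusion that the restriction of $(\Pi\times V)_{P}$ to the essential ideal $\K(\ell^{2}(\N))\otimes A$ is itself irreducible, which is exactly what is exploited in Remark~\ref{map Ip}, and it avoids the multiplier-algebra detour needed to put $V$ and $V^{*}$ into play. Your non-vanishing argument coincides with the paper's.
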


\begin{proof}
To see that $(\Pi\times V)_{P}$ is irreducible, we show that every $\xi\in\ell^2(\N,H)\backslash\{0\}$ is a cyclic vector for $(\Pi\times V)_{P}$, that is $\ell^2(\N,H)=\clsp\{(\Pi\times V)_{P}(x)(\xi):x\in(A\times_{\alpha}^{\piso}\N)\}$. We show that
\begin{align}
\label{f6}
\H:=\clsp\{(\Pi\times V)_{P}(v_{n}^{*}j_{A}(a)(1-v^{*}v)v_{m})(\xi):a\in A, n,m\in\N\}
\end{align}
equals $\ell^2(\N,H)$ which is enough. By viewing $\ell^2(\mathbb{N},H)$ as the Hilbert space $\ell^2(\mathbb{N})\otimes H$, it suffices to see that each $e_n\otimes h$ belongs to $\H$, where $\{e_n\}_{n=0}^{\infty}$ is the usual orthonormal basis of $\ell^2(\mathbb{N})$ and $h\in H$. Since $\xi\neq0$ in $\ell^2(\mathbb{N},H)$, there is $m\in\mathbb{N}$ such that $\xi(m)\neq0$ in $H$. But $\xi(m)$ is a cyclic vector for the representation $\pi:A\rightarrow B(H)$ as $\pi$ is irreducible. Thus we have $\overline{\textrm{span}}\{\pi(a)(\xi(m)):a\in A\}=H$, and hence $\textrm{span}\{e_n\otimes\big(\pi(a)\xi(m)\big):n\in\N, a\in A\}$ is dense in $\ell^2(\mathbb{N})\otimes H\simeq\ell^2(\mathbb{N},H)$. So we only have to show that $\H$ contains each element $e_n\otimes\big(\pi(a)\xi(m)\big)$. Calculation shows that
\begin{eqnarray*}
\begin{array}{rcl}
e_n\otimes\big(\pi(a)\xi(m)\big)&=&(V_n^{*}\Pi(a)(1-V^{*}V)V_m)(\xi)\\
&=&(\Pi\times V)_{P}(v_{n}^{*}j_{A}(a)(1-v^{*}v)v_{m})(\xi),
\end{array}
\end{eqnarray*}
and therefore $e_n\otimes(\pi(a)\xi(m))\in \H$ for every $a\in A$ and $n\in\N$. So we have $\H=\ell^2(\mathbb{N},H)$.

To show that $(\Pi\times V)_{P}$ does not vanish on $\K(\ell^{2}(\N))\otimes A$, first note that since $\pi$ is nonzero, $\pi(a)h\neq 0$ for some $a\in A$ and $h\in H$. Now if we take $(1-TT^{*})\otimes a\in\K(\ell^{2}(\N))\otimes A$, then $(\Pi\times V)_{P}(\mu((1-TT^{*})\otimes a))=(\Pi\times V)_{P}(j(a)(1-v^{*}v))\neq 0$. This is because for $(e_{0}\otimes h)\in\ell^2(\mathbb{N},H)$, we have
$$(\Pi\times V)_{P}(j_{A}(a)(1-v^{*}v))(e_{0}\otimes h)=\Pi(a)(1-V^{*}V)(e_{0}\otimes h)=e_{0}\otimes \pi(a)h,$$
which is not zero in $\ell^2(\N,H)$ as $\pi(a)h\neq 0$.
\end{proof}

\begin{remark}
\label{map Ip}
The primitive ideals $\I_{P}$ are actually the kernels of the irreducible representations $(\Pi\times V)_{P}$ which form the open dense subset
$$\U:=\{\I\in\Prim (A\times_{\alpha}^{\piso}\N):\K(\ell^{2}(\N))\otimes A\simeq\ker q\not\subset \I\}$$
of $\Prim \big(A\times_{\alpha}^{\piso}\N\big)$ homeomorphic to $\Prim (\K(\ell^{2}(\N))\otimes A)$. Now $\Prim (\K(\ell^{2}(\N))\otimes A)$ itself is homeomorphic to $\Prim A$ via the (Rieffel) homeomorphism $P\mapsto \K(\ell^{2}(\N))\otimes P$. But $\K(\ell^{2}(\N))\otimes P$ is the kernel of the irreducible representation $(\id\otimes\pi)$ of $\K(\ell^{2}(\N))\otimes A$, which indeed equals the restriction $(\Pi\times V)_{P}|_{\K(\ell^{2}(\N))\otimes A}$. Therefore we have
\begin{eqnarray*}
\begin{array}{rcl}
\I_{P}\cap (\K(\ell^{2}(\N))\otimes A)&=&\ker((\Pi\times V)_{P}|_{\K(\ell^{2}(\N))\otimes A})\\
&=&\ker(\id\otimes\pi)=\K(\ell^{2}(\N))\otimes P.
\end{array}
\end{eqnarray*}
Consequently the map $P\mapsto\I_{P}$ is a homeomorphism of $\Prim A$ onto the open dense subset $\U$ of $\Prim (A\times_{\alpha}^{\piso}\N)$.
\end{remark}

Now we want to describe the topology of $\Prim ((B_{\Z}\otimes A)\times_{\beta\otimes\alpha^{-1}}\Z)\simeq\Prim (A\times_{\alpha}^{\piso}\N)$ and identify the primitive ideals of $A\times_{\alpha}^{\piso}\N$ coming from $A\times_{\alpha}\Z$ under the following two conditions:
\begin{itemize}
\item[(1)] when $A$ is separable and abelian, by applying a theorem of Williams, namely Theorem \ref{Will th};
\item[(2)] when $A$ is separable and $\Z$ acts on $\Prim A$ freely, by applying Theorem \ref{prim free action}.
\end{itemize}

\subsection{The topology of $\Prim ((B_{\Z}\otimes A)\times_{\beta\otimes\alpha^{-1}}\Z)$ when $A$ is separable and abelian}
\label{case 1}
Suppose that $A$ is separable and abelian. Then $(B_{\Z}\otimes A)\times_{\beta\otimes\alpha^{-1}}\Z$ is isomorphic to the crossed product $C_{0}(\Omega(B_{\Z}\otimes A))\times_{\tau} \Z$ associated to the second countable locally compact transformation group $(\Z,\Omega(B_{\Z}\otimes A))$. Therefore by Theorem \ref{Will th}, $\Prim ((B_{\Z}\otimes A)\times_{\beta\otimes\alpha^{-1}}\Z)$ is homeomorphic to $\Omega(B_{\Z}\otimes A)\times \TT/\sim$. But we want to describe $\Omega(B_{\Z}\otimes A)\times \TT/\sim$ precisely. To do this, we need to analyze $\Omega(B_{\Z}\otimes A)$, and since $\Omega(B_{\Z}\otimes A)\simeq\Omega(B_{\Z})\times \Omega(A)$ (see \cite[Theorem B.37]{RW} or \cite[Theorem B.45]{RW}), we have to compute $\Omega(B_{\Z})$ first.

\begin{lemma}
\label{BZ}
Let $\{-\infty\} \cup \mathbb{Z} \cup \{\infty\}$ be the two-point compactification of $\mathbb{Z}$. Then $\Omega(B_{\Z})$ is homeomorphic to the open dense subset $\mathbb{Z} \cup \{\infty\}$.
\end{lemma}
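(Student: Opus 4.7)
The plan is to compute $\Omega(B_\Z)$ directly from the generator--relation description of $B_\Z$. Each $1_n\in\ell^\infty(\Z)$ is the characteristic function of $\{k\in\Z:k\ge n\}$, so the $1_n$ are projections satisfying $1_n\ge 1_{n+1}$ and $1_n\,1_m=1_{\max\{n,m\}}$. Any character $\phi\in\Omega(B_\Z)$ is determined by its values on this generating family, and each $\phi(1_n)\in\{0,1\}$; the multiplicative relations then force the set $S_\phi:=\{n\in\Z:\phi(1_n)=1\}$ to be downward closed in $\Z$.

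Since $\phi$ is a nonzero multiplicative functional, $S_\phi$ is nonempty, and the nonempty downward-closed subsets of $\Z$ are exactly the half-lines $(-\infty,k]\cap\Z$ for $k\in\Z$ and the whole of $\Z$. In the first case $\phi$ coincides with evaluation at $k$ (because $1_n(k)=1$ iff $n\le k$), and I would label this character $\phi_k$. In the remaining case, a direct estimate on linear combinations of $1_n$'s shows that every $f\in B_\Z$ admits a well-defined limit $\lim_{k\to+\infty}f(k)$, and the character at issue is $\phi_\infty(f):=\lim_{k\to+\infty}f(k)$, which satisfies $\phi_\infty(1_n)=1$ for every $n$. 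This yields a bijection $k\mapsto\phi_k$ from $\Z\cup\{\infty\}$ onto $\Omega(B_\Z)$.

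It remains to identify the topology. I would verify that this bijection is a homeomorphism by matching neighbourhood bases. The projection $1_k-1_{k+1}\in B_\Z$ is the characteristic function of the singleton $\{k\}$, and its Gelfand transform evaluated at $\phi_j$ is $\delta_{jk}$; hence every $\phi_k$ with $k\in\Z$ is an isolated point of $\Omega(B_\Z)$. A basic weak-$\ast$ neighbourhood of $\phi_\infty$ determined by finitely many generators $1_{n_1},\dots,1_{n_r}$ reduces to $\{\phi_\infty\}\cup\{\phi_k:k\ge N\}$ where $N=\max_i n_i$. This matches exactly the subspace topology on $\Z\cup\{\infty\}$ inherited from the two-point compactification $\{-\infty\}\cup\Z\cup\{\infty\}$, in which integers are isolated and basic neighbourhoods of $\infty$ have the form $\{k\in\Z:k\ge N\}\cup\{\infty\}$; openness and density of $\Z\cup\{\infty\}$ follow from the fact that $\{-\infty\}$ is closed but every neighbourhood of $-\infty$ contains integers.

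The main subtlety is to check that one obtains the subspace topology from the \emph{two-point} compactification rather than from the one-point compactification of $\Z$; concretely, negative integers must not accumulate at $\phi_\infty$. This is confirmed by the observation that for any fixed $n_0\in\Z$ the weak-$\ast$ open set $\{\phi\in\Omega(B_\Z):\phi(1_{n_0})\neq 0\}$ is a neighbourhood of $\phi_\infty$ that contains no $\phi_k$ with $k<n_0$, so large-negative integer characters stay bounded away from the character at infinity.
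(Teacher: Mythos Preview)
Your argument is correct, and it takes a genuinely different route from the paper's. The paper first identifies $B_\Z$ concretely as the algebra of functions $f:\Z\to\C$ with $\lim_{n\to -\infty}f(n)=0$ and $\lim_{n\to+\infty}f(n)$ existing, then embeds $B_\Z$ as the maximal ideal $I=\{\tilde f:\tilde f(-\infty)=0\}$ of $C(\{-\infty\}\cup\Z\cup\{\infty\})$; the spectrum of an ideal in $C(X)$ is the open set on which the ideal does not vanish, so $\Omega(B_\Z)\simeq\Z\cup\{\infty\}$ with its relative topology, and an explicit countable basis is read off at the end. Your approach instead works entirely from the generator--relation picture: you classify characters by the downward-closed set $S_\phi=\{n:\phi(1_n)=1\}$, match the two possible shapes of $S_\phi$ with the evaluation characters $\phi_k$ and the limit character $\phi_\infty$, and then compute the topology by hand using the Gelfand transforms of $1_k-1_{k+1}$ and of $1_{n_0}$. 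The paper's method is shorter once one has the embedding, and it explains conceptually why the two-point compactification appears; your method is more elementary (it avoids invoking the ideal/open-set correspondence) and makes the neighbourhood base at $\phi_\infty$ completely explicit, which is exactly what the later arguments in the paper need. One small point worth making explicit in your write-up: the reason it suffices to test weak-$\ast$ convergence on the generators $1_n$ (rather than on arbitrary $f\in B_\Z$) is that $\lsp\{1_n\}$ is norm-dense and all characters have norm one.
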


\begin{proof}
First note that $B_{\mathbb{Z}}$ exactly consists of those functions $f:\mathbb{Z}\rightarrow \mathbb{C}
$ such that $\lim_{n\rightarrow -\infty}f(n)=0$ and $\lim_{n\rightarrow \infty}f(n)$ exists. Thus the complex homomorphisms (irreducible representations) of $B_{\mathbb{Z}}$ are given by the evaluation maps $\{\varepsilon_{n}:n\in \mathbb{Z}\}$, and the map $\varepsilon_{\infty}:B_{\mathbb{Z}}\rightarrow \mathbb{C}$ defined by $\varepsilon_{\infty}(f):=\lim_{n\rightarrow \infty}f(n)$ for all $f\in B_{\mathbb{Z}}$. So we have $\Omega(B_{\mathbb{Z}})=\{\varepsilon_{n}:n\in \mathbb{Z}\}\cup \{\varepsilon_{\infty}\}$. Note that the kernel of $\varepsilon_{\infty}$ is the ideal $C_{0}(\mathbb{Z})=\clsp\{1_{n}-1_{m}:n<m\in \mathbb{Z}\}$ of $B_{\mathbb{Z}}$. Now let $\{-\infty\} \cup \mathbb{Z} \cup \{\infty\}$ be the two-point compactification of $\mathbb{Z}$ which is homeomorphic to the subspace
$$X:=\{-1\}\cup\{-1+1/(1-n):n\in\mathbb{Z}, n<0\}\cup\{1-1/(1+n):n\in\mathbb{Z}, n\geq0\}\cup\{1\}$$ of $\mathbb{R}$. Then the map
$$f\in B_{\mathbb{Z}}\mapsto \tilde{f}\in C(\{-\infty\} \cup \mathbb{Z} \cup \{\infty\}),$$ where
\[
\tilde{f}(r):=
   \begin{cases}
      \lim_{n\rightarrow \infty} f(n) &\textrm{if}\empty\ \text{$r=\infty$,}\\
      f(r) &\textrm{if}\empty\ \text{$r\in\mathbb{Z}$,}\ \textrm{and}\\
      0 &\textrm{if}\empty\ \text{$r=-\infty$,}
   \end{cases}
\]
embeds $B_{\mathbb{Z}}$ in $C(\{-\infty\} \cup \mathbb{Z} \cup \{\infty\})$ as the maximal ideal $$I:=\{\tilde{f}\in C(\{-\infty\} \cup \mathbb{Z} \cup \{\infty\}): \tilde{f}(-\infty)=0\}.$$ Thus it follows that $\Omega(B_{\mathbb{Z}})$ is homeomorphic to $\hat{I}$, and $\hat{I}$ itelf is homeomorphic to the open subset
$$\{\pi\in C(\{-\infty\} \cup \mathbb{Z} \cup \{\infty\})^{\wedge}: \pi|_{I}\neq 0\}=\{\tilde{\varepsilon}_{r}: r\in(\mathbb{Z} \cup \{\infty\})\}$$ of $C(\{-\infty\} \cup \mathbb{Z} \cup \{\infty\})^{\wedge}$ in which each $\tilde{\varepsilon}_{r}$ is an evaluation map. So by the homeomorphism between $C(\{-\infty\} \cup \mathbb{Z} \cup \{\infty\})^{\wedge}$ and $\{-\infty\} \cup \mathbb{Z} \cup \{\infty\}$, the open subset $\{\tilde{\varepsilon}_{r}: r\in(\mathbb{Z} \cup \{\infty\})\}$ is homeomorphic to the open (dense) subset $\mathbb{Z} \cup \{\infty\}$ of $\{-\infty\} \cup \mathbb{Z} \cup \{\infty\}$ equipped with the relative topology. Therefore $\Omega(B_{\mathbb{Z}})$ is in fact homeomorphic to $\mathbb{Z} \cup \{\infty\}$. One can see that $\mathbb{Z} \cup \{\infty\}$ is indeed a second countable locally compact Hausdorff space with
$$\mathcal{B}:=\{\{n\}:n\in\mathbb{Z}\}\cup\{J_{n}:n\in\mathbb{Z}\}$$ as a countable basis for its topology, where $J_{n}:=\{n,n+1,n+2,...\}\cup\{\infty\}$ for every $n\in\mathbb{Z}$.
\end{proof}

\begin{remark}
\label{Z-action}
Before we continue, we need to mention that, if $A$ is a separable $C^*$-algebra (not necessarily abelian), then by \cite[Theorem B.45]{RW} and using Lemma \ref{BZ}, $(C_{0}({\Z})\otimes A)^{\widehat{}}$ and $(B_{\Z}\otimes A)^{\widehat{}}$ are homeomorphic to $\Z\times \hat{A}$ and $(\mathbb{Z} \cup \{\infty\})\times \hat{A}$ respectively. Also $\Prim(C_{0}({\Z})\otimes A)$ and $\Prim(B_{\Z}\otimes A)$ are homeomorphic to $\Z\times \Prim A$ and $(\mathbb{Z} \cup \{\infty\})\times \Prim A$ respectively (note that these homeomorphisms are $\Z$-equivariant for the action of $\Z$). Since $C_{0}({\Z})\otimes A$ is an (essential) ideal of $B_{\Z}\otimes A$, we have the following commutative diagram:
\begin{equation*}
\begin{diagram}\dgARROWLENGTH=0.5\dgARROWLENGTH
\node{\Z\times \hat{A}} \arrow{s,l}{\id}\arrow{e}\node{(C_{0}({\Z})\otimes A)^{\widehat{}}} \arrow{s,l}{\i}\arrow{e,t}{\Theta}
\arrow{s}\arrow{e}\node{\Prim(C_{0}({\Z})\otimes A)}\arrow{s,l}{\tilde{\i}}\arrow{e}{}
\node{\Z\times \Prim A}\arrow{s,l}{\id}\\
\node{(\mathbb{Z} \cup \{\infty\})\times \hat{A}} \arrow{e} \node{(B_{\Z}\otimes A)^{\widehat{}}} \arrow{e,t}{\tilde{\Theta}} \node {\Prim(B_{\Z}\otimes A)} \arrow{e}{} \node {(\mathbb{Z} \cup \{\infty\})\times \Prim A,}
\end{diagram}
\end{equation*}
where $\Theta$ and $\tilde{\Theta}$ are the canonical continuous, open surjections, and $\i$ an $\tilde{\i}$ are the canonical embedding maps. Now to see how $\Z$ acts on $(\mathbb{Z} \cup \{\infty\})\times \hat{A}$ (and accordingly on $(\mathbb{Z} \cup \{\infty\})\times \Prim A)$, note that since the crossed products $(C_{0}({\Z})\otimes A)\times_{\beta\otimes\alpha^{-1}}\Z$ and $(C_{0}({\Z})\otimes A)\times_{\beta\otimes\id}\Z$ are isomorphic (see \cite[Lemma 7.4]{W}), we have
$$n\cdot (m,[\pi])=(m+n,[\pi])\ \textrm{and}\ n\cdot(\infty,[\pi])=(n+\infty,n\cdot[\pi])=(\infty,[\pi\circ\alpha_{n}])$$
for all $n,m\in\Z$ and $[\pi]\in\hat{A}$. Accordingly
$$n\cdot (m,P)=(m+n,P)\ \textrm{and}\ n\cdot(\infty,P)=(\infty,\alpha_{n}^{-1}(P))$$
for all $n,m\in\Z$ and $P\in\Prim A$.
\end{remark}

So when $A$ is separable and abelian, using Lemma \ref{BZ}, $\Omega(B_{\Z}\otimes A)=(\mathbb{Z} \cup \{\infty\})\times \Omega(A)$. Now to describe $((\mathbb{Z} \cup \{\infty\})\times \Omega(A))\times \TT/\sim$, note that by Remark \ref{Z-action}, $\Z$ acts on $(\mathbb{Z} \cup \{\infty\})\times \Omega(A)$ as follows:
$$n\cdot (m,\phi)=(m+n,\phi)\ \textrm{and}\ n\cdot(\infty,\phi)=(\infty,\phi\circ\alpha_{n})$$
for all $n,m\in\Z$ and $\phi\in\Omega(A)$. Therefore, the stability group of each $(m,\phi)$ is $\{0\}$, and the stability group of each $(\infty,\phi)$ equals the stability group $\Z_{\phi}$ of $\phi$. Accordingly, the $\mathbb{Z}$-orbit of each $(m,\phi)$ is $\Z\times \{\phi\}$, and the $\mathbb{Z}$-orbit of $(\infty,\phi)$ is $\{\infty\}\times \Z\cdot\phi$, where $\Z\cdot\phi$ is the $\mathbb{Z}$-orbit of $\phi$. So for the pairs (or triples) $((m,\phi),z)$ and $((n,\psi),w)$ of $(\mathbb{Z}\times\Omega(A))\times \mathbb{T}$, we have
\begin{eqnarray*}
\begin{array}{rcl}
((m,\phi),z)\sim((n,\psi),w)&\Longleftrightarrow&\overline{\Z \cdot(m,\phi)}=\overline{\Z\cdot(n,\psi)}\\
&\Longleftrightarrow&\overline{\Z\times \{\phi\}}=\overline{\Z\times \{\psi\}}\\
&\Longleftrightarrow&\overline{\Z}\times \overline{\{\phi\}}=\overline{\Z}\times \overline{\{\psi\}}\\
&\Longleftrightarrow&(\mathbb{Z} \cup \{\infty\})\times \overline{\{\phi\}}=(\mathbb{Z} \cup \{\infty\})\times \overline{\{\psi\}}\\
&\Longleftrightarrow&(\mathbb{Z} \cup \{\infty\})\times \{\phi\}=(\mathbb{Z} \cup \{\infty\})\times \{\psi\}.
\end{array}
\end{eqnarray*}
The last equivalence follows from the fact that $\Omega(A)$ is Hausdorff. Therefore $((m,\phi),z)$ and $((n,\psi),w)$ are in the same equivalence class in $((\mathbb{Z} \cup \{\infty\})\times \Omega(A))\times \TT/\sim$ if and only if $\phi=\psi$, while $((m,\phi),z)\nsim ((\infty,\psi),w)$ for every $\psi\in \Omega(A)$ and $w\in \mathbb{T}$, because
$$\overline{\Z \cdot(\infty,\psi)}=\overline{\{\infty\}\times \Z\cdot\psi}=\overline{\{\infty\}}\times \overline{\Z\cdot\psi}=\{\infty\}\times \overline{\Z\cdot\psi}.$$
Thus if $\phi\in \Omega(A)$, then all pairs $((m,\phi),z)$ for every $m\in\Z$ and $z\in\TT$ are in the same equivalence class, which can be parameterized by $\phi\in\Omega(A)$. On the other hand, for the pairs $((\infty,\phi),z)$ and $((\infty,\psi),w)$, we have
\begin{eqnarray*}
\begin{array}{rcl}
((\infty,\phi),z)\sim((\infty,\psi),w)&\Longleftrightarrow&\overline{\Z \cdot(\infty,\phi)}=\overline{\Z\cdot(\infty,\psi)}\ \textrm{and}\ \gamma_{z}|_{\Z_{\phi}}=\gamma_{w}|_{\Z_{\phi}}\\
&\Longleftrightarrow&\{\infty\}\times \overline{\Z\cdot\phi}=\{\infty\}\times \overline{\Z\cdot\psi}\ \textrm{and}\ \gamma_{z}|_{\Z_{\phi}}=\gamma_{w}|_{\Z_{\phi}}.
\end{array}
\end{eqnarray*}
Therefore
$$((\infty,\phi),z)\sim((\infty,\psi),w)\Longleftrightarrow\overline{\Z\cdot\phi}=\overline{\Z\cdot\psi}\ \textrm{and}\ \gamma_{z}|_{\Z_{\phi}}=\gamma_{w}|_{\Z_{\phi}},$$
which means if and only if the pairs $(\phi,z)$ and $(\psi,w)$ are in the same equivalence class in the quotient space $\Omega(A)\times \TT/\sim$ homeomorphic to $\Prim \big(A\times_{\alpha}\Z)$. Therefore $((\infty,\phi),z)\sim((\infty,\psi),w)$ in $((\mathbb{Z} \cup \{\infty\})\times \Omega(A))\times \TT/\sim$ precisely when $(\phi,z)\sim(\psi,w)$ in $\Omega(A)\times \TT/\sim$, and hence the class of each $((\infty,\phi),z)$ in $((\mathbb{Z} \cup \{\infty\})\times \Omega(A))\times \TT/\sim$ can be parameterized by the class of $(\phi,z)$ in $\Omega(A)\times \TT/\sim$. So we can identify $((\mathbb{Z} \cup \{\infty\})\times \Omega(A))\times \TT/\sim$ with the disjoint union
$$\Omega(A)\sqcup (\Omega(A)\times \TT/\sim).$$ Now we have:

\begin{theorem}
\label{prim piso}
Let $(A,\alpha)$ be a system consisting of a separable abelian $C^{*}$-algebra $A$ and an automorphism $\alpha$ of $A$. Then $\Prim (A\times_{\alpha}^{\piso}\N)$ is homeomorphic to $\Omega(A)\sqcup (\Omega(A)\times \TT/\sim)$, equipped with the (quotient) topology in which the open sets are of the form
$$\{U\subset \Omega(A):U\ \textrm{is open in}\ \Omega(A)\}\cup$$
$$\{U\cup W: U\ \textrm{is a nonempty open subset of}\ \Omega(A),\textrm{and}\ W\ \textrm{is open in}\ (\Omega(A)\times \TT/\sim)\}.$$
\end{theorem}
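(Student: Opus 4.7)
The plan is to compose the chain of homeomorphisms prepared in the preceding discussion. By the realization of $A\times_{\alpha}^{\piso}\N$ as a full corner of $(B_{\Z}\otimes A)\times_{\beta\otimes\alpha^{-1}}\Z$ from \cite[\S5]{AZ}, one has $\Prim(A\times_{\alpha}^{\piso}\N)\simeq\Prim((B_{\Z}\otimes A)\times_{\beta\otimes\alpha^{-1}}\Z)$. Because $A$ is separable and abelian, $B_{\Z}\otimes A\simeq C_0(\Omega(B_{\Z})\times\Omega(A))$, and Lemma \ref{BZ} identifies $\Omega(B_{\Z})$ with $\Z\cup\{\infty\}$. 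Williams' Theorem \ref{Will th} applied to the transformation group $(\Z,(\Z\cup\{\infty\})\times\Omega(A))$ then yields a homeomorphism
\begin{equation*}
\Prim(A\times_{\alpha}^{\piso}\N)\simeq\big((\Z\cup\{\infty\})\times\Omega(A)\big)\times\TT/\sim.
\end{equation*}
The set-theoretic identification of the right-hand space with $\Omega(A)\sqcup(\Omega(A)\times\TT/\sim)$ has already been performed in the paragraphs just before the theorem (using the $\Z$-action recorded in Remark \ref{Z-action}), so only the topology needs to be unpacked.

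For the topology, I will invoke the fact (from Remark \ref{rmk 1}, applied to both transformation groups in play) that the two quotient maps
\begin{equation*}
\textsf{q}\colon (\Z\cup\{\infty\})\times\Omega(A)\times\TT\to \Omega(A)\sqcup(\Omega(A)\times\TT/\sim),\qquad \textsf{q}'\colon\Omega(A)\times\TT\to \Omega(A)\times\TT/\sim
\end{equation*}
are open surjections. Every open set in the big quotient is therefore a union of $\textsf{q}$-images of basic open rectangles $S\times V\times Z$ with $S$ open in $\Z\cup\{\infty\}$, $V$ open in $\Omega(A)$, and $Z$ open in $\TT$. A case split on whether $\infty\in S$ then yields the two families in the statement: if $\infty\notin S$ every point of $S\times V\times Z$ is sent to its $\Omega(A)$-coordinate, so the image equals $V\subset\Omega(A)$ and lies entirely in the first summand; if $\infty\in S$ then $S$ contains some $J_n$ and thus meets $\Z$, so the image is $V\cup\textsf{q}'(V\times Z)$, whose first piece is nonempty in $\Omega(A)$ and whose second piece is open in $\Omega(A)\times\TT/\sim$ by openness of $\textsf{q}'$.

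Taking arbitrary unions of such images, and using that the sets $\textsf{q}'(V\times Z)$ form a basis for the topology of $\Omega(A)\times\TT/\sim$, I recover exactly the two families of open sets asserted in the statement. The main subtle step — and the one that deserves the most care — is the forced nonemptiness of $U$ in the second family: any open set meeting the $\Omega(A)\times\TT/\sim$-summand is automatically forced to meet the $\Omega(A)$-summand, because $\{\infty\}$ is not open in $\Z\cup\{\infty\}$ and every basic open containing a point $(\infty,\phi,z)$ also contains points $(m,\phi,z)$ with $m\in\Z$, whose $\textsf{q}$-images land in $\Omega(A)$. This combinatorial feature of the topology of $\Z\cup\{\infty\}$ is the only non-bookkeeping ingredient of the argument.
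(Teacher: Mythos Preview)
Your proposal is correct and follows essentially the same route as the paper: both invoke the openness of the quotient maps $\textsf{q}$ and $\tilde{\textsf{q}}$ and then compute the images of basic product opens, with the paper using the explicit basis $\{\{n\}\}\cup\{J_n\}$ for $\Z\cup\{\infty\}$ from Lemma~\ref{BZ} where you instead split on whether $\infty\in S$. Your remark that the forced nonemptiness of $U$ comes from $\{\infty\}$ not being open is exactly the content of the paper's choice of $J_n$-type basic sets.
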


\begin{proof}
Since the quotient map $\textsf{q}:((\mathbb{Z} \cup \{\infty\})\times \Omega(A))\times \TT\rightarrow\Omega(A)\sqcup (\Omega(A)\times \TT/\sim)$ is open, as well as $\tilde{\textsf{q}}:\Omega(A)\times \TT\rightarrow\Omega(A)\times \TT/\sim$, for every $n\in\Z$, every open subset $O$ of $\Omega(A)$, and every open subset $V$ of $\TT$, the forward image of open subsets $\{n\}\times O\times V$ and $J_{n}\times O\times V$ by $\textsf{q}$, forms a basis for the topology of $\Omega(A)\sqcup (\Omega(A)\times \TT/\sim)$, which is
$$\{O\subset \Omega(A):O\ \textrm{is open in}\ \Omega(A)\}\cup$$
$$\{O\cup \tilde{\textsf{q}}(O\times V): O\ \textrm{is a nonempty open subset of}\ \Omega(A),\textrm{and}\ V\ \textrm{is open in}\ \TT\}.$$
As the open subsets $\tilde{\textsf{q}}(O\times V)$ also form a basis for the quotient topology of $\Omega(A)\times \TT/\sim$, we can see that each open subset of $\Omega(A)\sqcup (\Omega(A)\times \TT/\sim)$ is either an open subset $U$ of $\Omega(A)$ or of the form $U\cup W$ for some nonempty open subset $U$ in $\Omega(A)$ and some open subset $W$ in $\Omega(A)\times \TT/\sim$.
\end{proof}

\begin{remark}
\label{rmk 1}
Under the condition of Theorem \ref{prim piso}, the primitive ideals of $\Prim(A\times_{\alpha}^{\piso}\N)$ coming from $\Prim(A\times_{\alpha}\Z)$, which form the closed subset
$$\mathcal{F}:=\{\J\in\Prim (A\times_{\alpha}^{\piso}\N):\K(\ell^{2}(\N))\otimes A\simeq\ker q\subset\J\},$$ are the kernels of the irreducible representations $(\Ind_{\Z_{\phi}}^{\Z}(\phi \times \gamma_{z}|_{\Z_{\phi}}))\circ q$ corresponding to the equivalence classes of the pairs $(\phi,z)$ in $\Omega(A)\times \TT/\sim$ (again by using Theorem \ref{Will th}). Therefore if $\J_{[(\phi,z)]}$ denotes $\ker (\Ind_{\Z_{\phi}}^{\Z}(\phi \times \gamma_{z}|_{\Z_{\phi}})\circ q)$, then $\mathcal{F}=\{\J_{[(\phi,z)]}: \phi\in\Omega(A), z\in\TT\}$, and the map $[(\phi,z)]\mapsto \J_{[(\phi,z)]}$ is homeomorphism of $\Prim(A\times_{\alpha}\Z)\simeq\Omega(A)\times \TT/\sim$ onto $\mathcal{F}$.
\end{remark}

\begin{prop}
\label{GCR piso}
Let $(A,\alpha)$ be a system consisting of a separable abelian $C^*$-algebra $A$ and an automorphism $\alpha$ of $A$. Then $A\times_{\alpha}^{\piso}\N$ is GCR if and only if $\Z\backslash\Omega(A)$ is a $T_{0}$ space.
\end{prop}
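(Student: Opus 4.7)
The plan is to exploit the short exact sequence (\ref{exact seq.1}) together with the standard permanence result that an extension $0\to I\to B\to B/I\to 0$ of $C^{*}$-algebras is GCR if and only if both $I$ and $B/I$ are GCR. Applied to (\ref{exact seq.1}), this says $A\times_{\alpha}^{\piso}\N$ is GCR iff both $\K(\ell^{2}(\N))\otimes A$ and $A\times_{\alpha}\Z$ are GCR.

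The ideal half is free: since $A$ is separable and abelian, $A\simeq C_{0}(\Omega(A))$ is CCR, hence type I, and tensoring a type I $C^{*}$-algebra with $\K(\ell^{2}(\N))$ preserves the type I property. So $\K(\ell^{2}(\N))\otimes A$ is always GCR, independent of $\alpha$, and the proposition reduces to the claim that the classical crossed product $A\times_{\alpha}\Z\simeq C_{0}(\Omega(A))\times_{\alpha}\Z$ is GCR if and only if $\Z\backslash\Omega(A)$ is a $T_{0}$ space.

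For this remaining equivalence, I would appeal to the transformation-group criterion for second countable locally compact $(\Z,\Omega(A))$ with $\Z$ abelian discrete: $C_{0}(\Omega(A))\times_{\alpha}\Z$ is GCR iff every $\Z$-orbit in $\Omega(A)$ is locally closed, which (in the second countable setting) is equivalent to $\Z\backslash\Omega(A)$ being $T_{0}$. For the \emph{if} direction, local closure of the orbits gives a composition series of $\Z$-invariant open subsets of $\Omega(A)$ whose successive quotients are transitive $\Z$-spaces $\Z/\Z_{\phi}$; by Green's imprimitivity theorem each subquotient $C_{0}(\Z/\Z_{\phi})\times\Z$ is Morita equivalent to $C^{*}(\Z_{\phi})$, which is commutative and therefore GCR. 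Since GCR is stable under Morita equivalence and extensions, $C_{0}(\Omega(A))\times_{\alpha}\Z$ is GCR. For the \emph{only if} direction, suppose $\Z\backslash\Omega(A)$ is not $T_{0}$; then there are $\phi,\psi\in\Omega(A)$ with $\overline{\Z\cdot\phi}=\overline{\Z\cdot\psi}$ but $\Z\cdot\phi\neq \Z\cdot\psi$. Using Theorem \ref{Will th} and the parameterization from Remark \ref{rmk 1}, I would exhibit two irreducible induced representations associated to $\phi$ and $\psi$ that are inequivalent as representations but share the same primitive ideal, contradicting GCR.

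The main obstacle is the converse half of this last step: turning the failure of the $T_{0}$ property into inequivalent irreducibles with identical kernels. The key fact one needs is that although \cite[Lemma 8.34]{W} (cited in the preliminaries) makes the two kernels agree, the corresponding representations induced from distinct orbits $\Z\cdot\phi$ and $\Z\cdot\psi$ cannot be intertwined, because any intertwiner would have to move $\phi$ into $\Z\cdot\psi$ (and vice versa), which the orbit assumption rules out. Everything else in the argument is formal manipulation of the short exact sequence and standard GCR permanence properties.
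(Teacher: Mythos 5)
Your proposal is correct and follows essentially the same route as the paper: reduce via the exact sequence (\ref{exact seq.1}) and the permanence of GCR under extensions (\cite[Theorem 5.6.2]{Murphy}) to the GCR-ness of $\K(\ell^{2}(\N))\otimes A$ (automatic, since $A$ abelian makes it CCR) and of $A\times_{\alpha}\Z\simeq C_{0}(\Omega(A))\times\Z$, and then invoke the transformation-group criterion. The only difference is that the paper simply cites \cite[Theorem 8.43]{W} for the final equivalence with $\Z\backslash\Omega(A)$ being $T_{0}$, whereas you sketch a proof of that cited theorem.
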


\begin{proof}
By \cite[Theorem 5.6.2]{Murphy}, $A\times_{\alpha}^{\piso}\N$ is GCR if and only if $\K(\ell^{2}(\N))\otimes A\simeq\ker q$ and $A\times_{\alpha}\Z\simeq C_{0}(\Omega(A))\times_{\tau}\Z$ are GCR. But since $A$ is abelian, $\K(\ell^{2}(\N))\otimes A$ is automatically CCR, and hence it is GCR. Therefore $A\times_{\alpha}^{\piso}\N$ is GCR precisely when $A\times_{\alpha}\Z$ is GCR. By \cite[Theorem 8.43]{W}, $A\times_{\alpha}\Z$ is GCR if and only if $\Z\backslash\Omega(A)$ is $T_{0}$.
\end{proof}

\begin{prop}
\label{CCR piso}
Let $(A,\alpha)$ be a system consisting of a separable abelian $C^*$-algebra $A$ and an automorphism $\alpha$ of $A$. Then $A\times_{\alpha}^{\piso}\N$ is not CCR.
\end{prop}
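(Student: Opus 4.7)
The plan is to exhibit, for some $P\in\Prim A$, an irreducible representation of $A\times_{\alpha}^{\piso}\N$ on an infinite-dimensional Hilbert space whose image is not contained in the compact operators; the natural candidates are the representations $(\Pi\times V)_{P}$ from Proposition~\ref{Ip}.

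Because $A$ is abelian, every irreducible representation $\pi$ of $A$ is a character $\phi$, so $H=\C$, and $(\Pi\times V)_{P}$ acts irreducibly on the infinite-dimensional space $\ell^{2}(\N,\C)=\ell^{2}(\N)$. Under this representation $j_{A}(a)$ maps to the diagonal operator $\Pi(a)=\operatorname{diag}(\phi(\alpha_{n}(a)))_{n\geq 0}$. The main step will be to produce $a\in A$ so that the sequence $(\phi(\alpha_{n}(a)))_{n\geq 0}$ does not lie in $c_{0}(\N)$; then $\Pi(a)$ is a non-compact operator on $\ell^{2}(\N)$, and the image of $(\Pi\times V)_{P}$ is not contained in $\K(\ell^{2}(\N))$.

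In the unital case I would take $a=1_{A}$, which immediately gives $\Pi(1_{A})=I_{\ell^{2}(\N)}$, not compact since $\ell^{2}(\N)$ is infinite-dimensional. For the general case I would pick $\phi\in\Omega(A)$ whose $\Z$-orbit admits an accumulation point $\psi\in\Omega(A)$ and choose $a\in A$ with $\psi(a)\neq 0$; then along a subsequence $n_{k}\to\infty$, $\phi(\alpha_{n_{k}}(a))\to\psi(a)\neq 0$, so the diagonal entries of $\Pi(a)$ cannot decay to zero and $\Pi(a)$ is non-compact.

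The delicate part is to guarantee, in the non-unital situation, the existence of a character whose orbit has an accumulation point in $\Omega(A)$ (rather than escaping to infinity). Once such $\phi$ and the accompanying $a$ are in hand, $(\Pi\times V)_{P}$ is an irreducible representation of $A\times_{\alpha}^{\piso}\N$ on the infinite-dimensional space $\ell^{2}(\N)$ whose image sits strictly outside $\K(\ell^{2}(\N))$, so $(\Pi\times V)_{P}$ is not a CCR representation and therefore $A\times_{\alpha}^{\piso}\N$ itself cannot be CCR.
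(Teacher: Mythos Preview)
Your approach differs from the paper's: the paper argues via the Morita equivalence with $(B_\Z\otimes A)\times_{\beta\otimes\alpha^{-1}}\Z$ and Williams's orbit criterion, whereas you try to exhibit directly an irreducible representation whose image is not contained in the compacts. For unital $A$ your argument is clean and complete: $\Pi(1_A)$ is the identity on the infinite-dimensional space $\ell^2(\N)$, so $(\Pi\times V)_P$ is an irreducible representation whose image is not contained in $\K(\ell^2(\N))$.

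The gap you flag in the non-unital case is genuine and, in fact, cannot be repaired along these lines. Take $A=C_0(\Z)$ with $\alpha$ the bilateral shift. Then $\Omega(A)=\Z$ is discrete and $\Z$ acts by translation, so \emph{no} orbit has an accumulation point in $\Omega(A)$; for every character $\phi=\varepsilon_j$ and every $a\in C_0(\Z)$ the diagonal entries $\phi(\alpha_n(a))$ tend to $0$, so each $\Pi(a)$ is compact and the image of $(\Pi\times V)_{\varepsilon_j}$ lies in $\K(\ell^2(\N))$. Worse, in this example \emph{every} irreducible representation of $A\times_\alpha^{\piso}\N$ is compact-valued: those not vanishing on $\ker q$ are the $(\Pi\times V)_{\varepsilon_j}$ just described, while the unique one vanishing on $\ker q$ factors through $C_0(\Z)\rtimes_\alpha\Z\simeq\K(\ell^2(\Z))$. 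Hence $C_0(\Z)\times_\alpha^{\piso}\N$ is CCR, and the proposition as stated fails for this non-unital $A$. The paper's proof relies on the claim in Remark~\ref{Z-action} that $n\cdot(m,\phi)=(m+n,\phi)$, but the literal action of $\beta\otimes\alpha^{-1}$ on the spectrum is $n\cdot(m,\phi)=(m+n,\phi\circ\alpha_n)$; the untwisting isomorphism invoked there lives only on the ideal $C_0(\Z)\otimes A$ and does not extend continuously to $B_\Z\otimes A$. With the correct action, the orbit of $(0,\varepsilon_j)$ in the $C_0(\Z)$ example is $\{(n,\varepsilon_{j-n}):n\in\Z\}$, which is closed in $(\Z\cup\{\infty\})\times\Z$ because the second coordinate escapes to infinity as $n\to+\infty$. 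So your acknowledged gap is not a defect of your method but a symptom of the statement needing the extra hypothesis that some $\Z$-orbit in $\Omega(A)$ has an accumulation point (automatic when $A$ is unital).
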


\begin{proof}
Note that $A\times_{\alpha}^{\piso}\N$ is CCR if and only if $(B_{\Z}\otimes A)\times_{\beta\otimes\alpha^{-1}}\Z\simeq C_{0}(\Omega(B_{\Z}\otimes A))\times_{\tau} \Z$ is CCR, because they are Morita equivalent (see \cite[Proposition I.43]{W}). Since for the $\mathbb{Z}$-orbit of a pair $(m,\phi)$, we have
$$\overline{\Z\cdot(m,\phi)}=\overline{\Z\times \{\phi\}}=\overline{\Z}\times \overline{\{\phi\}}=(\mathbb{Z} \cup \{\infty\})\times \{\phi\},$$
it follows that $\mathbb{Z}$-orbit of $(m,\phi)$ is not closed in $\Omega(B_{\Z}\otimes A)=(\mathbb{Z} \cup \{\infty\})\times \Omega(A)$. Therefore by \cite[Theorem 8.44]{W}, $C_{0}(\Omega(B_{\Z}\otimes A))\times_{\tau} \Z$ is not CCR, and hence $A\times_{\alpha}^{\piso}\N$ is not CCR.
\end{proof}

\begin{example}(Pimsner-Voiculescu Toeplitz algebra)
\label{PV algebra}
Suppose $\T(A,\alpha)$ is the Pimsner-Voiculescu Toeplitz algebra associated to the system $(A,\alpha)$ (see \cite{PV}). It was shown in \cite[\S 5]{AZ} that $\T(A,\alpha)$ is isomorphic to the partial-isometric crossed product $A\times_{\alpha^{-1}}^{\piso}\N$ associated to the system $(A,\alpha^{-1})$. Therefore when $A$ is abelian and separable, the description of $\Prim(\T(A,\alpha))$ follows completely from Theorem \ref{prim piso}. In particular, for the trivial system $(\C,\id)$, $\T(\C,\id)$ is the Toeplitz algebra $\T(\Z)$ of integers isomorphic to $\C\times_{\id}^{\piso}\N$. So again by Theorem \ref{prim piso}, $\Prim(\T(\Z))$ corresponds to the disjoint union
$\{0\}\sqcup \TT$ in which every (nonempty) open set is of the form $\{0\} \cup W$ for some open subset $W$ of $\mathbb{T}$. This description is known which coincides with the description of $\Prim(\T(\Z))$ obtained from the well-known short exact sequence $0\rightarrow\K(\ell^{2}(\N))\rightarrow \T(\Z)\rightarrow C(\TT)\rightarrow 0$.
\end{example}

\begin{example}
\label{rational}
Consider the system $(C(\TT),\alpha)$ in which the action $\alpha$ is given by rotation through the angle $2\pi\theta$ with $\theta$
rational. By using the discussion in \cite[Example 8.46]{W}, $\Prim(C(\TT)\times_{\alpha}^{\piso}\N)$ can be identified with the disjoint union
$$\TT \sqcup \TT^{2},$$ in which by Theorem \ref{prim piso}, each open set is given by
$$\{U\subset \TT:U\ \textrm{is open in}\ \TT\}\cup$$
$$\{U\cup W: U\ \textrm{is a nonempty open subset of}\ \TT,\textrm{and}\ W\ \textrm{is open in}\ \TT^{2}\}.$$
Moreover the orbit space $\Z\backslash\ \TT$ is homeomorphic to $\TT$, which is obviously $T_{0}$ (in fact Hausdorff). So it follows by Proposition \ref{GCR piso} that $C(\TT)\times_{\alpha}^{\piso}\N$ is GCR.
\end{example}

\subsection{The topology of $\Prim ((B_{\Z}\otimes A)\times_{\beta\otimes\alpha^{-1}}\Z)$ when $A$ is separable and $\Z$ acts on $\Prim A$ freely}
\label{case 2}
Consider a system $(A,\alpha)$ in which $A$ is separable, and $\Z$ acts on $\Prim A$ freely. It follows that $\Z$ acts on $\Prim(B_{\Z}\otimes A)$ freely too. This is because, firstly, by \cite[Theorem B.45]{RW}, $\Prim(B_{\Z}\otimes A)$ is homeomorphic to $\Prim B_{\Z}\times \Prim A$, and hence it is homeomorphic to $(\mathbb{Z} \cup \{\infty\})\times \Prim A$. Then $\Z$ acts on $(\mathbb{Z} \cup \{\infty\})\times \Prim A$ such that
$$n\cdot (m,P)=(m+n,P)\ \textrm{and}\ n\cdot(\infty,P)=(\infty,\alpha_{n}^{-1}(P))$$
for all $n,m\in\Z$ and $P\in\Prim A$. Therefore the stability group of each $(\infty,P)$ equals the stability group $\Z_{P}$ of $P$, which is $\{0\}$ as $\Z$ acts on $\Prim A$ freely, and stability group of each $(m,P)$ is clearly $\{0\}$. So in the separable system $(B_{\Z}\otimes A,\Z,\beta\otimes\alpha^{-1})$ (with $\Z$ abelian), $\Z$ acts on $\Prim(B_{\Z}\otimes A)\simeq (\mathbb{Z} \cup \{\infty\})\times \Prim A$ freely. Therefore by Theorem \ref{prim free action}, $\Prim ((B_{\Z}\otimes A)\times_{\beta\otimes\alpha^{-1}}\Z)$ is homeomorphic to the quasi-orbit space $\O(\Prim (B_{\Z}\otimes A))=\O((\mathbb{Z} \cup \{\infty\})\times \Prim A)$, which describes $\Prim(A\times_{\alpha}^{\piso}\N)$ as well. We want to describe the quotient topology of $\O((\mathbb{Z} \cup \{\infty\})\times \Prim A)$ precisely, and identify the primitive ideals of $A\times_{\alpha}^{\piso}\N$ coming from $\Prim(A\times_{\alpha}\Z)$. We have
\begin{eqnarray*}
\begin{array}{rcl}
\O(m,P)=\O(n,Q)&\Longleftrightarrow&\overline{\Z \cdot(m,P)}=\overline{\Z\cdot(n,Q)}\\
&\Longleftrightarrow&\overline{\Z\times \{P\}}=\overline{\Z\times \{Q\}}\\
&\Longleftrightarrow&\overline{\Z}\times \overline{\{P\}}=\overline{\Z}\times \overline{\{Q\}}\\
&\Longleftrightarrow&(\mathbb{Z} \cup \{\infty\})\times \overline{\{P\}}=(\mathbb{Z} \cup \{\infty\})\times \overline{\{Q\}}.
\end{array}
\end{eqnarray*}
Therefore $\O(m,P)=\O(n,Q)$ if and only if $\overline{\{P\}}=\overline{\{Q\}}$, and this happens precisely when $P=Q$ by the definition of the hull-kernel (Jacobson) topology on $\Prim A$ (that is why the primitive ideal space of any $C^{*}$-algebra is always $T_{0}$ \cite[Theorem 5.4.7]{Murphy}). So all pairs $(m,P)$ for every $m\in\Z$ have the same quasi-orbit which can be parameterized by $P\in \Prim A$, and since
$$\overline{\Z\cdot(\infty,Q)}=\overline{\{\infty\}\times \Z\cdot Q}=\overline{\{\infty\}}\times \overline{\Z\cdot Q}=\{\infty\}\times \overline{\Z\cdot Q},$$
$\O(m,P)\neq \O(\infty,Q)$ for all $m\in\Z$ and $P,Q\in\Prim A$. Moreover
\begin{eqnarray*}
\begin{array}{rcl}
\O(\infty,P)=\O(\infty,Q)&\Longleftrightarrow&\overline{\Z\cdot(\infty,P)}=\overline{\Z\cdot(\infty,Q)}\\
&\Longleftrightarrow&\{\infty\}\times \overline{\Z\cdot P}=\{\infty\}\times \overline{\Z\cdot Q}.

\end{array}
\end{eqnarray*}
Thus $\O(\infty,P)=\O(\infty,Q)$ if and only if $\overline{\Z\cdot P}=\overline{\Z\cdot Q}$, which means if and only if $P$ and $Q$ have the same quasi-orbit ($\O(P)=\O(Q)$) in $\O(\Prim A)\simeq \Prim(A\times_{\alpha}\Z)$. So each quasi-orbit $\O(\infty,P)$ can be parameterized by the quasi-orbit $\O(P)$ in $\O(\Prim A)$, and we can therefore identify $\O((\mathbb{Z} \cup \{\infty\})\times \Prim A)$ by the disjoint union
$$\Prim A \sqcup \O(\Prim A ).$$ Then we have:
\begin{theorem}
\label{prim piso2}
Let $(A,\alpha)$ be a system consisting of a separable $C^{*}$-algebra $A$ and an automorphism $\alpha$ of $A$. Suppose that $\Z$ acts on $\Prim A$ freely. Then $\Prim (A\times_{\alpha}^{\piso}\N)$ is homeomorphic to $\Prim A \sqcup \O(\Prim A )$, equipped with the (quotient) topology in which the open sets are of the form
$$\{U\subset \Prim A:U\ \textrm{is open in}\ \Prim A\}\cup$$
$$\{U\cup W: U\ \textrm{is a nonempty open subset of}\ \Prim A,\textrm{and}\ W\ \textrm{is open in}\ \O(\Prim A )\}.$$
\end{theorem}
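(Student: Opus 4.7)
The plan is to mirror the strategy of Theorem \ref{prim piso}: exhibit a convenient basis of open sets on the domain $(\mathbb{Z}\cup\{\infty\})\times \Prim A$, push it forward through the (open) quasi-orbit map, and read off the description on the disjoint union. Concretely, the identifications already established in the discussion preceding the theorem give a bijection
\[
\O\bigl((\mathbb{Z}\cup\{\infty\})\times \Prim A\bigr)\;\longleftrightarrow\;\Prim A\sqcup \O(\Prim A),
\]
under which the class of a pair $(m,P)$ with $m\in\mathbb Z$ goes to $P\in\Prim A$ and the class of $(\infty,P)$ goes to $\O(P)\in\O(\Prim A)$. Combined with Theorem \ref{prim free action} (applicable because $\mathbb Z$ is amenable and, by the paragraph above the theorem, acts freely on $\Prim(B_\Z\otimes A)\simeq(\mathbb Z\cup\{\infty\})\times\Prim A$) and the full-corner realization of $A\times_\alpha^{\piso}\N$ inside $(B_\Z\otimes A)\times_{\beta\otimes\alpha^{-1}}\Z$, this bijection is a homeomorphism; so the only content to be verified is the explicit form of the quotient topology.

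For this I would fix the countable basis $\mathcal B=\{\{n\}:n\in\mathbb Z\}\cup\{J_n:n\in\mathbb Z\}$ of $\mathbb Z\cup\{\infty\}$ from Lemma \ref{BZ} and work with the product basis $\{B\times O : B\in\mathcal B,\ O\subset\Prim A\text{ open}\}$ on $(\mathbb Z\cup\{\infty\})\times\Prim A$. Let $\textsf{q}$ denote the quasi-orbit map to $\Prim A\sqcup\O(\Prim A)$ and $\tilde{\textsf{q}}:\Prim A\to\O(\Prim A)$ the quasi-orbit map for the $\mathbb Z$-action on $\Prim A$; both are open surjections (this is standard for quasi-orbit maps of group actions on topological spaces, and is exactly the fact used in Remark \ref{rmk 1}). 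A direct computation then gives
\[
\textsf{q}(\{n\}\times O)=O\subset \Prim A,\qquad \textsf{q}(J_n\times O)=O\,\cup\,\tilde{\textsf{q}}(O),
\]
since every point of $\Prim A$ is the image of some $(n,P)$ with $n\in\mathbb Z$, and every point of $\O(\Prim A)$ is the image of some $(\infty,P)$.

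Because $\textsf{q}$ is open, the images $\{\textsf{q}(B\times O)\}$ form a basis of the quotient topology on $\Prim A\sqcup\O(\Prim A)$, so every open set is a union of sets of the two displayed forms. Since the collection $\{\tilde{\textsf{q}}(O):O\subset\Prim A\text{ open}\}$ is a basis for the quotient topology on $\O(\Prim A)$, an arbitrary union of sets $O\cup\tilde{\textsf{q}}(O)$ together with sets of the first type can be re-expressed as $U$ or as $U\cup W$, where $U$ is open in $\Prim A$ and $W$ is open in $\O(\Prim A)$. This yields exactly the description in the statement. The only delicate point, and the step I expect to be the main obstacle to state cleanly, is that if the $W$-component appears then its accompanying $U$-component is automatically nonempty: this is forced by the basic sets of the form $\textsf{q}(J_n\times O)=O\cup\tilde{\textsf{q}}(O)$, in which the $\Prim A$-part $O$ can never be empty once $\tilde{\textsf{q}}(O)$ is. Equivalently, it reflects the fact that $\Prim A$ sits as an open dense subset inside $\Prim(A\times_\alpha^{\piso}\N)$ (compare Remark \ref{map Ip}), so no purely ``boundary'' open set exists.
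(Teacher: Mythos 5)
Your proposal reconstructs exactly the route the paper intends (push the product basis through the open quasi-orbit map), but there is a genuine gap at the central computation $\textsf{q}(J_n\times O)=O\cup\tilde{\textsf{q}}(O)$, and it propagates into the final description of the topology. The $\Z$-action on $\Prim(B_\Z\otimes A)\simeq(\Z\cup\{\infty\})\times\Prim A$ is $n\cdot(m,P)=(m+n,\alpha_n^{-1}(P))$ for \emph{every} $m$, finite or infinite: the untwisting isomorphism invoked in Remark \ref{Z-action} conjugates the action on the open piece $\Z\times\Prim A$ by the homeomorphism $(m,P)\mapsto(m,\alpha_m^{-1}(P))$, and this map has no continuous extension to the point at infinity when the action on $\Prim A$ is nontrivial. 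So the orbit of a finite point $(m,P)$ is $\{(m+k,\alpha_k^{-1}(P))\}_{k\in\Z}$, not $\Z\times\{P\}$, and after parameterizing the finite quasi-orbits by $\Prim A$ one gets $\textsf{q}(J_n\times O)=\bigl(\bigcup_{m\ge n}m\cdot O\bigr)\cup\tilde{\textsf{q}}(O)$, where $m\cdot O$ denotes the translate of $O$ under the induced $\Z$-action on $\Prim A$. A consistency check inside your own argument would have flagged this: since $\textsf{q}$ is open, $\textsf{q}^{-1}(\textsf{q}(J_n\times O))$ must be the $\Z$-saturation of $J_n\times O$; with your formula that preimage would be $(\Z\times O)\cup(\{\infty\}\times\Z\cdot O)$, which fails to be open at any $(\infty,Q)$ with $Q\in\Z\cdot O\setminus O$.

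Because of this, the asserted topology is not what the construction produces. Your argument only proves one inclusion (every open set has the form $U$ or $U\cup W$), and the ``delicate point'' you single out (nonemptiness of $U$) is not the real one: the converse inclusion fails, because a set $U\cup W$ is open only when $U$ contains a tail $\bigcup_{m\ge n}m\cdot O$ of translates of a neighborhood of each point lying over $W$, which is a genuine constraint tying $U$ to $W$. Concretely, take $A=C(\TT)$ with $\alpha$ an irrational rotation (the paper's Example \ref{irrational}): by minimality $\bigcup_{m\ge n}m\cdot O=\TT$ for every nonempty open $O$, so the only basic open set meeting $\O(\Prim A)$ is all of $\TT\sqcup\O(\TT)$, and the point of $\O(\TT)$ has no proper open neighborhood --- not $U\cup\O(\TT)$ for an arbitrary nonempty open $U$. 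This can be corroborated without quasi-orbits: here each $\I_P$ has $\ker q$ in its closure (the forward orbit of $P$ accumulates at every point of $\TT$), so no nonempty subset of the open dense copy of $\TT$ is closed in $\Prim(C(\TT)\times_\alpha^{\piso}\N)$, and hence every proper ideal is contained in $\ker q$. The set-level bijection with $\Prim A\sqcup\O(\Prim A)$ survives, but the description of the open sets must be corrected to the one generated by the basic sets $\bigl(\bigcup_{m\ge n}m\cdot O\bigr)\cup\tilde{\textsf{q}}(O)$; the same repair is needed in Theorem \ref{prim piso}, whose proof you are mirroring.
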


\begin{proof}
Note that since by \cite[Lemma 6.12]{W}, the quasi-orbit map $\textsf{q}:\Prim(B_{\Z}\otimes A)\rightarrow\O(\Prim(B_{\Z}\otimes A))$ is continuous and open,  the proof follows from a similar argument to the proof of Theorem \ref{prim piso}. So we skip it here.
\end{proof}

\begin{remark}
\label{rmk 2}
Under the condition of Theorem \ref{prim piso2}, we want to identify the primitive ideals of $\Prim(A\times_{\alpha}^{\piso}\N)$ coming from $\Prim(A\times_{\alpha}\Z)$, which form the closed subset $$\mathcal{F}:=\{\J\in\Prim (A\times_{\alpha}^{\piso}\N):\K(\ell^{2}(\N))\otimes A\simeq\ker q\subset\J\}$$ homeomorphic to $\Prim (A\times_{\alpha}\Z)\simeq\O(\Prim A)$ (see Theorem \ref{prim free action}). These ideals are actually the kernels of the irreducible representations $(\Ind \pi)\circ q=(\tilde{\pi}\times U)\circ q$ of $A\times_{\alpha}^{\piso}\N$, where $\pi$ is an irreducible representation of $A$ with $\ker\pi=P$. But since the pair $(\tilde{\pi},U)$ is clearly a covariant partial-isometric representation of $(A,\alpha)$, one can see that in fact, $(\Ind \pi)\circ q=\tilde{\pi}\times^{\piso} U$, where $\tilde{\pi}\times^{\piso} U$ is the associated representation of $A\times_{\alpha}^{\piso}\N$ correspondent to the pair $(\tilde{\pi},U)$. Thus each element of $\mathcal{F}$ is of the form $\ker (\tilde{\pi}\times^{\piso} U)$ correspondent to the quasi-orbit $\O(P)$, and therefore we denote $\ker (\tilde{\pi}\times^{\piso} U)$ by $\J_{\O(P)}$. So the map $\O(P)\rightarrow\J_{\O(P)}$ is a homeomorphism of $\O(\Prim A)$ onto the closed subspace $\mathcal{F}$ of $\Prim \big(A\times_{\alpha}^{\piso}\N\big)$.
\end{remark}

For the following remark, we need to recall that the primitive ideal space of any $C^{*}$-algebra $A$ is locally compact \cite[Corollary 3.3.8]{Dix}. A locally compact space $X$ (not necessarily Hausdorff) is called \emph{almost Hausdorff} if each locally compact subspace $U$ contains a relatively open nonempty Hausdorff subset (see \cite[Definition 6.1.]{W}). If a $C^{*}$-algebra is GCR, then it is almost Hausdorrff (see the discussion on pages 171 and 172 of \cite{W}). Finally if $A$ is separable, then by applying \cite[Theorem A.38]{RW} and \cite[Proposition A.46]{RW}, it follows that $\Prim A$ is second countable.

\begin{remark}
\label{ZM result}
It follows from \cite{ZM} that if $(A,\Z,\alpha)$ is a separable system in which  $\Z$ acts on $\hat{A}$ freely, then $A\times_{\alpha}\Z$ is GCR if and only if $A$ is GCR and every $\Z$-orbit in $\hat{A}$ is discrete. But every $\Z$-orbit in $\hat{A}$ is discrete if and only if for each $[\pi]\in\hat{A}$, the map $\Z\rightarrow \Z\cdot[\pi]$ defined by $n\mapsto n\cdot[\pi]=[\pi\circ\alpha_{n}^{-1}]$ is a homeomorphism, and this statement itself, by \cite[Theorem 6.2 (Mackey-Glimm Dichotomy)]{W}, is equivalent to saying that the orbit space $\Z\backslash\hat{A}$ is $T_{0}$. Therefore we can rephrase the statement of \cite{ZM} to say that if $(A,\Z,\alpha)$ is a separable system in which $\Z$ acts on $\hat{A}$ freely, then $A\times_{\alpha}\Z$ is GCR if and only if $A$ is GCR and the orbit space $\Z\backslash\hat{A}$ is $T_{0}$.
\end{remark}

\begin{prop}
\label{GCR piso2}
Let $(A,\alpha)$ be a system consisting of a separable $C^*$-algebra $A$ and an automorphism $\alpha$ of $A$. Suppose that $\Z$ acts on $\hat{A}$ freely. Then $A\times_{\alpha}^{\piso}\N$ is GCR if and only if $A$ is GCR and the orbit space $\Z\backslash\hat{A}$ is $T_{0}$.
\end{prop}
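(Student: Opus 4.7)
The plan is to follow the same template as Proposition \ref{GCR piso}, using the short exact sequence \eqref{exact seq.1} to break the GCR question for $A\times_{\alpha}^{\piso}\N$ into two separate questions: one for the ideal $\ker q\simeq\K(\ell^{2}(\N))\otimes A$ and one for the quotient $A\times_{\alpha}\Z$. The crucial new ingredient, compared with Proposition \ref{GCR piso}, is that we no longer have $A$ abelian, so the quotient is no longer covered by Williams's transformation-group result; instead we will use Remark \ref{ZM result}, whose applicability is precisely guaranteed by the free-action hypothesis on $\hat{A}$.

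The first step is to invoke \cite[Theorem 5.6.2]{Murphy}, which says that in a short exact sequence $0\to I\to B\to B/I\to 0$ of $C^*$-algebras, $B$ is GCR if and only if both $I$ and $B/I$ are GCR. Applying this to \eqref{exact seq.1}, $A\times_{\alpha}^{\piso}\N$ is GCR if and only if $\K(\ell^{2}(\N))\otimes A$ and $A\times_{\alpha}\Z$ are both GCR. For the ideal $\K(\ell^{2}(\N))\otimes A$, I would use the fact that it is (strongly) Morita equivalent to $A$, or equivalently, that its irreducible representations are exactly those of the form $\id\otimes\pi$ for $\pi$ irreducible on $A$ (as used in Remark \ref{map Ip}); since the image of $\id\otimes\pi$ contains the compacts on its representation space precisely when the image of $\pi$ contains the compacts on $H$, $\K(\ell^{2}(\N))\otimes A$ is GCR if and only if $A$ is GCR.

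The second step is to handle $A\times_{\alpha}\Z$. Since $A$ is separable, $\Z$ is abelian and amenable, and $\Z$ acts freely on $\hat{A}$ by hypothesis, the situation is exactly the one covered by Remark \ref{ZM result}: $A\times_{\alpha}\Z$ is GCR if and only if $A$ is GCR and $\Z\backslash\hat{A}$ is $T_{0}$. Combining the two equivalences now gives that $A\times_{\alpha}^{\piso}\N$ is GCR if and only if $A$ is GCR (from the ideal piece) and, in addition, $\Z\backslash\hat{A}$ is $T_{0}$ (the further condition coming from the quotient piece), which is exactly the stated characterization.

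Since every ingredient is either a cited theorem or a short observation about $\K(\ell^{2}(\N))\otimes A$, there is no real obstacle; the only point that requires a line of justification is the passage from GCR-ness of $A$ to GCR-ness of $\K(\ell^{2}(\N))\otimes A$, which I would dispatch either by the Morita-equivalence remark above or by directly appealing to the fact that GCR is preserved under tensoring with the compact operators.
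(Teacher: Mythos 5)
Your proposal is correct and follows essentially the same route as the paper: the paper's proof is exactly the combination of the argument of Proposition \ref{GCR piso} (the short exact sequence \eqref{exact seq.1} plus \cite[Theorem 5.6.2]{Murphy}, with the ideal piece handled by the stability of GCR under tensoring with $\K(\ell^{2}(\N))$) together with Remark \ref{ZM result} for the quotient $A\times_{\alpha}\Z$. Your extra sentence justifying that $\K(\ell^{2}(\N))\otimes A$ is GCR if and only if $A$ is GCR is a welcome explicit touch that the paper leaves implicit.
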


\begin{proof}
The proof follows from a similar argument to the proof of Proposition \ref{GCR piso} and Remark \ref{ZM result}.
\end{proof}

\begin{example}
\label{irrational}
Consider the system $(C(\TT),\alpha)$ in which the action $\alpha$ is given by rotation through the angle $2\pi\theta$ with $\theta$
irrational. Then $\Z$ acts on $\Prim(C(\TT))=C(\TT)^{\widehat{}}=\TT$ freely (see \cite[Example 8.45]{W} or \cite[Example 10.18]{ECH}). Therefore by Theorem \ref{prim piso2}, $\Prim(C(\TT)\times_{\alpha}^{\piso}\N)$ can be identified with the disjoint union $\TT \sqcup \O(\TT)$. But the quasi-orbit space $\O(\TT)$ contains only one point as each $\Z$-orbit is dense in $\TT$ (see \cite[Lemma 3.29]{W}). Let us parameterize this only point by $0$ (note that $\O(\TT)$ is homeomorphic to the primitive ideal space of the irrational rotation algebra $A_{\theta}:=C(\TT)\times_{\alpha}\Z$ which is simple). So $\Prim(C(\TT)\times_{\alpha}^{\piso}\N)$ is actually identified with $$\TT \sqcup \{0\},$$ where each open set is given by
$$\{U\subset \TT:U\ \textrm{is open in}\ \TT\}\cup \{U\cup \{0\}: U\ \textrm{is a nonempty open subset of}\ \TT\}.$$
Here we would like to mention that $0$ in $\TT \sqcup \{0\}$ corresponds to the primitive ideal $\K(\ell^{2}(\N))\otimes C(\TT)$ of $C(\TT)\times_{\alpha}^{\piso}\N$.
Finally, although $C(\TT)$ is GCR (in fact CCR), the orbit space $\Z\backslash\TT$ is not $T_{0}$ as each $\Z$-orbit is dense in $\TT$. So it follows by Proposition \ref{GCR piso2} that $C(\TT)\times_{\alpha}^{\piso}\N$ is not GCR.
\end{example}

\begin{remark}
\label{PV algebra 2}
Recall that since the Pimsner-Voiculescu Toeplitz algebra $\T(A,\alpha)$ is isomorphic to $A\times_{\alpha^{-1}}^{\piso}\N$ (see Example \ref{PV algebra}), if $A$ is separable and $\Z$ acts on $\Prim A$ freely, then the description of $\Prim(\T(A,\alpha))$ is obtained completely from Theorem \ref{prim piso2}.
\end{remark}

\section{Primitivity and Simplicity of $A\times_{\alpha}^{\piso}\N$}
\label{sec:last}
In this section, we want to discuss the primitivity and simplicity of $A\times_{\alpha}^{\piso}\N$. Recall that a $C^{*}$-algebra is called \emph{primitive} if it has a faithful nonzero irreducible representation, and it is called \emph{simple} if it has no nontrivial ideal.

\begin{theorem}
\label{primitive}
Let $(A,\alpha)$ be a system consisting of a $C^{*}$-algebra $A$ and an automorphism $\alpha$ of $A$. Then $A\times_{\alpha}^{\piso}\N$ is primitive if and only $A$ is primitive.
\end{theorem}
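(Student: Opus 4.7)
My plan is to prove both implications via the essential ideal $\ker q \simeq \K(\ell^{2}(\N))\otimes A$ sitting inside $A\times_{\alpha}^{\piso}\N$, combined with Proposition \ref{Ip}.

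For the ``if'' direction, suppose $A$ is primitive and let $\pi:A\to B(H)$ be a faithful nonzero irreducible representation, so $P:=\ker\pi=\{0\}$. I would then apply Proposition \ref{Ip} to get the irreducible representation $(\Pi\times V)_{P}$ of $A\times_{\alpha}^{\piso}\N$ on $\ell^{2}(\N,H)$, with kernel $\I_{0}$. It remains to show $\I_{0}=\{0\}$. By Remark \ref{map Ip}, we have
\[
\I_{0}\cap\big(\K(\ell^{2}(\N))\otimes A\big)=\K(\ell^{2}(\N))\otimes\{0\}=\{0\}.
\]
Since $\K(\ell^{2}(\N))\otimes A\simeq\ker q$ is an essential ideal of $A\times_{\alpha}^{\piso}\N$ (cited from \cite[Proposition 2.5]{AZ} in the lead-up to Proposition \ref{Ip}), any ideal meeting this essential ideal trivially must itself be zero. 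Hence $\I_{0}=\{0\}$, which shows $\{0\}$ is primitive in $A\times_{\alpha}^{\piso}\N$, i.e.\ $(\Pi\times V)_{0}$ is a faithful nonzero irreducible representation.

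For the ``only if'' direction, suppose $A\times_{\alpha}^{\piso}\N$ is primitive and let $\rho$ be a faithful nonzero irreducible representation. Since $\K(\ell^{2}(\N))\otimes A$ is a nonzero essential ideal, $\rho$ restricted to it is nonzero, and a standard fact about irreducible representations says that the restriction to any ideal is either zero or remains irreducible; hence $\rho|_{\K(\ell^{2}(\N))\otimes A}$ is irreducible, and faithfulness of $\rho$ passes to it automatically. Thus $\K(\ell^{2}(\N))\otimes A$ is primitive, and under the Rieffel homeomorphism $P\mapsto \K(\ell^{2}(\N))\otimes P$ of $\Prim A$ onto $\Prim(\K(\ell^{2}(\N))\otimes A)$ invoked in Remark \ref{map Ip}, primitivity transfers back to give $\{0\}\in\Prim A$, i.e.\ $A$ is primitive.

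The only subtlety is the faithfulness bookkeeping in the ``if'' direction; one could alternatively justify it directly by observing that the construction in \S\ref{piso cp} produces a faithful $\Pi$ whenever $\pi$ is faithful, whence the criterion of \cite[Theorem 4.8]{LR} (recalled immediately after Definition \ref{piso cp df}) guarantees that $\Pi\times V$ is faithful on $A\times_{\alpha}^{\piso}\N$, but the essential-ideal argument above is cleaner and avoids verifying the criterion on $\range(1-V^{*}V)$.
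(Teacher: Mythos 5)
Your proof is correct, and in one direction it takes a genuinely different (though closely related) route from the paper's. The ``only if'' direction is essentially identical to the paper's: restrict the faithful irreducible representation to the nonzero ideal $\K(\ell^{2}(\N))\otimes A$, observe the restriction is nonzero, irreducible and faithful, and pull primitivity back to $A$. In the ``if'' direction both arguments start from the same irreducible representation $(\Pi\times V)_{P}$ with $P=\{0\}$, but they verify faithfulness differently: the paper invokes the criterion of \cite[Theorem 4.8]{LR} and checks directly that $\Pi(a)(1-V^{*}V)=0$ forces $a=0$ by evaluating on vectors $e_{0}\otimes h$, whereas you deduce $\I_{0}=\{0\}$ from the identity $\I_{P}\cap\ker q=\K(\ell^{2}(\N))\otimes P$ of Remark \ref{map Ip} together with the essentiality of $\ker q$ (any ideal meeting an essential ideal trivially is zero). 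Your route buys a cleaner, more structural argument that reuses facts already established in Section \ref{sec:prim auto piso} and avoids any computation with the covariance relations; the paper's route is more self-contained at this point, relying only on the universal faithfulness criterion and an explicit vector computation, and does not need the essentiality of $\ker q$ or the intersection formula. Both are complete; your alternative remark at the end correctly identifies the paper's actual argument as the other viable option.
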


\begin{proof}
If $A\times_{\alpha}^{\piso}\N$ is primitive, it has a faithful nonzero irreducible representation $\rho:A\times_{\alpha}^{\piso}\N\rightarrow B(\H)$. Then since the restriction of $\rho$ to the ideal $\K(\ell^{2}(\N))\otimes A\simeq\ker q$ is nonzero, it gives an irreducible representation of $\K(\ell^{2}(\N))\otimes A$ which is clearly faithful. So it follows that $\K(\ell^{2}(\N))\otimes A$ is primitive, and therefore $A$ must be primitive as well.

Conversely, if $A$ is primitive, then it has a faithful nonzero irreducible representation $\pi$ on some Hilbert space $H$ ($P=\ker \pi=\{0\}$). We show that the associated irreducible representation $(\Pi\times V)_{P}$ of $A\times_{\alpha}^{\piso}\N$ on $\ell^{2}(\N,H)$ is faithful. By \cite[Theorem 4.8]{LR}, it is enough to see that if $\Pi(a)(1-V^{*}V)=0$, then $a=0$. If $\Pi(a)(1-V^{*}V)=0$, then
$$\Pi(a)(1-V^{*}V)(e_{0}\otimes h)=(e_{0}\otimes \pi(a)h)=0\ \ \textrm{for all}\ h\in H.$$ It follows that $\pi(a)h=0$ for all $h\in H$, and therefore $\pi(a)=0$. Since $\pi$ is faithful, we must have $a=0$. This completes the proof.
\end{proof}

\begin{remark}
Note that Theorem \ref{primitive} simply means that in the homeomorphism $P\mapsto \I_{P}$ mentioned in Remark \ref{map Ip}, $P$ is the zero ideal if and only if $\I_{P}$ is the zero ideal. This is because if $A\times_{\alpha}^{\piso}\N$ is primitive, then its zero ideal as one of its primitive ideals is of the form $\I_{P}$ (coming from $\Prim A$), as $\K(\ell^{2}(\N))\otimes A\neq 0$.
\end{remark}

Finally it is not difficult to see that $A\times_{\alpha}^{\piso}\N$ is not simple. This is because as we see, it contains $\K(\ell^{2}(\N))\otimes A$ as a nonzero ideal. Moreover if $\K(\ell^{2}(\N))\otimes A=A\times_{\alpha}^{\piso}\N$, then $A\times_{\alpha}\Z\simeq (A\times_{\alpha}^{\piso}\N)/(\K(\ell^{2}(\N))\otimes A$) must be the zero algebra. So it follows that $A=0$, which is a contradiction as we have $A\neq0$. Therefore $A\times_{\alpha}^{\piso}\N$ contains $\K(\ell^{2}(\N))\otimes A$ as a proper nonzero ideal, and hence we have proved the following:
\begin{theorem}
\label{simple}
Let $(A,\alpha)$ be a system consisting of a $C^{*}$-algebra $A$ and an automorphism $\alpha$ of $A$. Then $A\times_{\alpha}^{\piso}\N$ is not simple.
\end{theorem}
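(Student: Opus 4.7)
The plan is to exhibit $\K(\ell^{2}(\N))\otimes A$ as a proper nonzero ideal of $A\times_{\alpha}^{\piso}\N$, whence non-simplicity follows immediately. The existence of this ideal inside $A\times_{\alpha}^{\piso}\N$ is already handed to us by the short exact sequence \eqref{exact seq.1}, so what must be checked is that this ideal is both nonzero and proper.

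For nonzeroness, since $A \neq 0$, the spatial tensor product $\K(\ell^{2}(\N))\otimes A$ is itself nonzero, so $\ker q$ is a nonzero ideal of $A\times_{\alpha}^{\piso}\N$. For properness, I would argue by contradiction: if $\K(\ell^{2}(\N))\otimes A$ coincided with all of $A\times_{\alpha}^{\piso}\N$, then by exactness of \eqref{exact seq.1} the quotient $A\times_{\alpha}\Z$ would be the zero $C^{*}$-algebra. However, the canonical homomorphism $i_A: A \to A\times_{\alpha}\Z$ for the classical crossed product is injective (for instance, via the regular covariant representation on $\ell^{2}(\Z,H)$ induced by any faithful $\pi:A\to B(H)$), so $A\times_{\alpha}\Z = 0$ would force $A = 0$, contradicting the standing hypothesis that $(A,\alpha)$ is a system with $A\neq 0$. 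Hence $\ker q \simeq \K(\ell^{2}(\N))\otimes A$ is a proper nonzero ideal, and $A\times_{\alpha}^{\piso}\N$ fails to be simple.

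There is no genuine obstacle to overcome: the argument is essentially a two-step application of the short exact sequence \eqref{exact seq.1}, combined with the classical fact that $A$ embeds faithfully in its crossed product $A\times_{\alpha}\Z$. The only stylistic choice is whether to prove injectivity of $i_A$ in line or simply to invoke it as a standard feature of crossed products by a discrete group action, and I would opt for the latter in order to keep the proof as short as the content warrants.
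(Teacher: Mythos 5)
Your proposal is correct and follows essentially the same route as the paper: both exhibit $\ker q\simeq\K(\ell^{2}(\N))\otimes A$ as a nonzero ideal via the exact sequence \eqref{exact seq.1} and rule out $\ker q=A\times_{\alpha}^{\piso}\N$ by observing that the quotient $A\times_{\alpha}\Z$ would then be zero, forcing $A=0$. Your explicit appeal to the injectivity of $i_{A}:A\to A\times_{\alpha}\Z$ just makes precise a step the paper leaves implicit.
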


\subsection*{Acknowledgements}
This research is supported by Rachadapisek Sompote Fund for Postdoctoral Fellowship, Chulalongkorn University.

\end{document}